\definecolor{labelkey}{rgb}{0,0,1}
\theoremstyle{plain}
\newtheorem{THEOREM}{Theorem}[section]
\newtheorem{corollary}[THEOREM]{Corollary}
\newtheorem{lemma}[THEOREM]{Lemma}
\newtheorem{proposition}[THEOREM]{Proposition}
\theoremstyle{definition}
\theoremstyle{remark}
\newtheorem{remark}[THEOREM]{Remark}
\newcommand{\lem}[1]{Lemma~\ref{#1}}
\newcommand{\prop}[1]{Proposition~\ref{#1}}
\newcommand{\sect}[1]{Section~\ref{#1}}
\def \a {\alpha}
\def \b {\beta}
\def \d {\delta}
\def \e {\varepsilon}
\def \k {\kappa}
\def \l {\lambda}
\def \n {\nabla}
\def \s {\sigma}
\def \th {\theta}
\def \o {\omega}
\def \G {\Gamma}
\def \O {\Omega}
\def \bkap {{\boldsymbol \kappa}}
\def \bxi {{\boldsymbol \xi}}
\def \bu {{\bf u}}
\def \by {{\bf y}}
\def \cE {\mathcal{E}}
\def \cF {\mathcal{F}}
\def \cI {\mathcal{I}}
\def \cN {\mathcal{N}}
\newcommand{\Z}{\ensuremath{\mathbb{Z}}}   
\newcommand{\R}{\ensuremath{\mathbb{R}}}   
\newcommand{\T}{\ensuremath{\mathbb{T}}}   
\newcommand{\E}{\ensuremath{\mathbb{E}}}
\renewcommand{\S}{\ensuremath{\mathbb{S}}} 
\def \lan {\langle}
\def \ran {\rangle}
\def \p {\partial}
\def \ra {\rightarrow}
\def \ss {\subset}
\def \bs {\backslash}
\renewcommand{\geq}{\geqslant}
\renewcommand{\leq}{\leqslant}
\DeclareMathOperator{\supp}{supp} %
\DeclareMathOperator{\diver}{div} %
\def \dx  {\, \mbox{d}x}
\def \dby  {\, \mbox{d}\by}
\def \ds  {\, \mbox{d}s}
\def \dr  {\, \mbox{d}r}
\def \dk  {\, \mbox{d}\kappa}
\def \dbxi  {\, \mbox{d}\bxi}
\def \dmu  {\, \mbox{d}\mu}
\def \dnu  {\, \mbox{d}\nu}
\def \dell  {\, \mbox{d}\ell}
\def \hmin {h_{\mathrm{min}}}
\def \hmax {h_{\mathrm{max}}}
\def \pmin {p_{\mathrm{min}}}
\def \pmax {p_{\mathrm{max}}}
\begin{document}

\title{Volumetric theory of intermittency in fully developed turbulence}

\author{Alexey Cheskidov}

\address{Department of Mathematics, Statistics, and Computer Science,
	University of Illinois at Chicago, and School of Mathematics, Institute for Advanced Study, Princeton.}
\email{acheskid@uic.edu}

\author{Roman Shvydkoy}

\address{Department of Mathematics, Statistics, and Computer Science,
	University of Illinois at Chicago, and Isaac Newton Institute for Mathematical Sciences, Cambridge.}

\email{shvydkoy@uic.edu}

\date{\today}

\subjclass{76F02}

\keywords{Turbulence, intermittency, Frisch-Parisi formalism, multifractality, active volume, active region}

\thanks{\textbf{Acknowledgment.}  
	The work of R. Shvydkoy was supported in part by NSF
	grant DMS-2107956. He thanks Isaac Newton Institute for Mathematical Sciences for hospitality and support. The work of A. Cheskidov was partially supported by the NSF grant DMS-1909849 and Charles Simonyi Endowment at the Institute for Advanced Study.}

\begin{abstract} This study introduces a new family of volumetric flatness factors which give a rigorous parametric description of the phenomenon of intermittency in fully developed turbulent flows. These quantities gather information about the most ``active" part of a velocity field at each scale $\ell$, and allows one to define a dimension function $p \to D_p$ that recovers intermittency correction to the structure exponents $\zeta_p$  in an explicit way. In particular, the predictions of the Frisch-Parisi multifractal formalism can be recovered in a systematic and rigorous way.

Within this framework we identify active regions that carry the most energetic part of a velocity field at a given scale $\ell$. A threshold for what constitutes to be  active is defined explicitly. Active regions have proven to be experimentally observable in our previous joint work \cite{Ph-paper}, and shown to capture concentration of the energy cascade as $\ell \to 0$, in \cite{CS2014}.

We present several examples of fields which exhibit arbitrary multifractal spectrum within theoretically permitted limitations. At the same time we demonstrate with the use of a probabilistic argument that a random field is expected to produce the classical K41 spectrum in the limit $\ell\to 0$. Intermittent deviations from K41 theory are estimated at any finite scale also.  

Lastly, we present a detailed information-theoretic analysis of the introduced objects. In particular, we quantify concentration of a given source-field in terms of the volume factors, thresholds, and active regions.  
\end{abstract}


\maketitle

\tableofcontents

\section{Introduction}

The notion of a structure function goes back to the seminal work of Kolmogorov \cite{Kolmogorov}. For a length scale $\ell$, the $n$th order structure function is defined is
\begin{equation} \label{eq:def_S_p}
S_p(\ell) =  \lan |\d_\ell \bu|^p \ran =\lan |\bu(r+\vec{\ell},t) - \bu(r,t)|^p\ran,
\end{equation}
with some appropriate average $\lan \cdot \ran$ where $|\vec{\ell}| =\ell$. It is customary to represent the structure functions by the scaling law
\begin{equation} \label{eq:S_p_exponents-intro}
S_p(\ell) \sim (\varepsilon \ell)^{\frac{p}{3}}\left(\frac{\ell}{L}\right)^{\zeta_p -\frac{p}{3}} \sim U^p  \left(\frac{\ell}{L}\right)^{\zeta_p},
\end{equation}
where $U$ and $L$ are characteristic velocity and length scale, $\zeta_p$ are scaling exponents, and $\varepsilon$ is the average energy dissipation rate
predicted to be independent of the Reynolds number by Kolmogorov's theory of turbulence. More precisely,
\[
\frac{\varepsilon L}{U^3} \sim Re^0,
\]
for large $Re$, which has been proved in one direction (as an upper bound) for long time averages of solutions to the forced 3D Navier-Stokes equations \cite{doering_foias_2002}, while the other direction remains a major open problem in mathematical theory of turbulence.

Kolmogorov's theory of turbulence also predicts self-similar scaling $\d_\ell \bu \sim \ell^{1/3}$ and, as a consequence, the law
\begin{equation}\label{e:p3}
    \zeta_p = \frac{p}{3}.
\end{equation}
In fact, for the longitudinal third order structure function $S^{\parallel}_3(\ell) = \lan   (\d_\ell \bu \cdot \ell/|\ell| )^3 \ran$ this is known as the Kolmogorov's $\frac45$th law and can be derived from the first principles: $ S^{\parallel}_3(\ell) = - \frac45 \e \ell$, see \cite{Frisch} and \sect{s:45th} below. 
The validity of \eqref{e:p3} has been challenged almost immediately by Landau  due to intermittency -- non-uniform distribution of energy transfer from one scale to another -- although the first credible experimental confirmation of this phenomenon came much later in the work of Anselmet et. al. \cite{anselmet}. We refer to \cite{es-survey, Frisch} for detailed historic surveys of the subject.   While deviations from the law \eqref{e:p3} have now being supported by numerous data, see \cite{PhysRevE.48.R33, PhysRevLett.77.1488, Nature12466, Ph-paper, PhysRevFluids.5.054605} for the latest, the exact formula for $\zeta_p$ remains unknown.


Several phenomenological models have been invented to account for intermittency. At the core of these models lies the notion of a {\em spatial intermittency dimension} $D$ at scale $\ell$, a number between $0$ and $3$, such that 
\[
\text{Number of active eddies} \sim \left(\frac{L}{\ell}\right)^{D}.
\]
Here $L$ is the size of the domain, and by ``active eddies" we understand fluid blobs of size $\ell$ of the filtered field that participate in the energy transfer to the next scale $\ell/2$. The case $D=3$  corresponds to Kolmogorov's regime
where the whole region is actively involved in the transfer at each scale in the inertial range. The other borderline 
case is $D=0$ (extreme intermittency), where the number of eddies involved is bounded over all the scales. Intermittent flows, where $D<3$, exhibit deviations from Kolmogorov's regime by about $10\%$ experimentally, see the literature above. If defined in rigorous mathematical terms, the notion of intermittency can be useful in formulating effective regularity criteria for solutions to the classical incompressible fluid models, such as the Navier-Stokes equation. In \cite{CS2014}   the definition was proposed based on the Littlewood-Paley decomposition.  More precisely, localizing the fluid velocity $\bu$ in frequency by taking the Littlewood-Paley projection $\bu_\l$ to the dyadic shell of radius $\lambda = \ell^{-1}$, eddies can be defined as active regions where $\bu_\l$ is large is some $L^p$ norm. Then counting the number of eddies at each scale leads to the dimension as a saturation parameter $D_{q,p}$ in the classical Bernstein inequality
\begin{equation} \label{eq:Bernstein-intro}
\| \bu_\l \|_{L^q} = \l^{(3 - D_{q,p}) \left( \frac1p - \frac1q \right) } \|\bu_\l\|_{L^p}.
\end{equation}
This notion of intermittency when applied to Leray-Hopf solutions of the Navier-Stokes equations, with appropriate temporal averaging, leads to numerous rigorous results \cite{CS2014,CS-unified,10.1017/prm.2018.33,2112.11606}. For instance, $D_{\infty,2} \geq 3/2$ implies regularity of solutions, \cite{CS-unified}. In a parallel series of works, a similar in spirit theory based on the concept of sparseness was developed by Grujic et. al. in \cite{Zoran2018,Zoran2019}. 

While \eqref{eq:Bernstein-intro} is suitable for mathematical analysis, in physics a well-accepted measure of intermittency is the multifractal spectrum (MFR) introduced by Frisch an Parisi \cite{Frisch-Parisi} -- the dimension $d_h(\ell)$ of a set where the H\"older exponent $h$ is attained by a fluid flow at scale $\ell$. In this paper we introduce a new analytic framework based on a suitably defined volumetric quantities associated with the field $\d_{\ell} \bu$ that reconciles the two approaches in a rigorous way. The main idea is based on viewing velocity increments $\d_{\ell} \bu$ as a source of information whose concentration in active regions $A_p$ is measured by explicitly defined dimensions $D_p$ and then translated into  intermittency corrections to the structure functions $S_p$.
Moreover, we show that $D_p$ coincides with the MFR spectrum $d_h$ under the transformation $h=\zeta_p'$. 

To describe the main ingredients of the framework more specifically let us pass to the adimensional variables $\bu \to \bu/U$ and $\ell \to \ell/L$ and consider the scailing exponents defined according to \eqref{eq:S_p_exponents-intro}
\[
\zeta_p: = \log_{\ell} \lan |\d_\ell \bu|^p \ran,
\]
where $\lan |\d_\ell \bu|^p \ran$ is the appropriate average (e.g., in space and angles) of the velocity increments at length scale $\ell$, even though the precise nature of the localisation of $\bu$ is not important for many of our results. As a function of $p$, the scaling exponent $\zeta_p$ is smooth, concave, with $\zeta_0=0$.
Then for Lebesgue exponents $p,q\in \mathbb{R}$ we 
define  the \emph{$(q,p)$-volume factor} and dimension $D_{q,p}$ at scale $\ell$, 
\begin{equation}\label{e:vol-intro-beginning}
V_{q,p}(\ell) := \frac{ \lan |\d_\ell \bu|^q \ran^{\frac{p}{p-q}}}{ \lan |\d_\ell \bu|^p \ran^{\frac{q}{p-q}}}, \qquad 
 D_{q,p} (\ell) := 3 - \log_\ell V_{q,p},
\end{equation}
consistent with \eqref{eq:Bernstein-intro}.
Remarkably, in the limit, the intermittency dimension $D_p:=\lim_{q\to p} D_{q,p}$  turns out to be 
\[
D_p=3- \zeta_p +p \zeta'_p,
\]
and hence the dimension $D_p$ coincides with the MFR dimension $d_h$ defined via Legendre transform
\begin{equation}\label{e:MFR-intro}
D_{p(h)} = d_h  :=\inf_p(3+ hp-\zeta_p).
\end{equation}
Indeed, $D_p=d_h$ at the point where the infimum is attained, i.e., $h=\zeta_p'= \frac{\lan |\d_\ell \bu|^p \log_\ell |\d_\ell \bu| \ran}{\lan |\d_\ell \bu|^p \ran}$. The resulting map $p \to D_p$ reproduces the same multifractal spectrum as $h \to d_h$ but now in terms of the same Lebesgue variable $p$ that parameterises the structure exponents $\zeta_p$. This allows us to derive an explicit intermittency correction to the classical linear profile \eqref{e:p3}:
\[
\zeta_p = p\zeta_0'  + \mathcal{I}_p, \quad \text{ where } \quad \cI_p = p \int_0^p \frac{D_s-3}{s^2} \ds.
\]
In Subsection~\ref{s:Functional_properties} we describe functional properties of $\zeta_p$, $D_p$, and $d_h$. In particular, all possible scenarios are classified at two endpoints $(p_{\max} , h_{\min} )$ and $(p_{\min} ,h_{\max} )$. 

The dimension $D_p$ is naturally connected to the active volume 
\[
V_p:= \lim_{q \to p} V_{q,p} = \ell^{3-D_p},
\]
which is an upper bound on the size of the active region $A$ that can be parameterized in terms of the H\"older or Lebesgue exponents as 
\[
A^{\text{H\"older}}_h:=\{x: c \ell^h \leq |\d_\ell \bu(x)| \leq  C \ell^h \} = A^{\text{Lebesgue}}_p:=\{x: c \ell^{\zeta_p'} \leq |\d_\ell \bu(x)| \leq  C \ell^{\zeta_p'} \}.
\]
Here, the two exponents are in the same correspondence as before $h=\zeta'_p $. Indeed, we show that
\begin{equation}\label{e:AVp}
    \lan 1_{A} \ran \lesssim \ell^{3-d_h}=  \ell^{3-D_p}=  V_p.
\end{equation}
For more detailed multifractal analysis of H\"older spectrum based on Besov scaling we refer to \cite{J1997,J2000, 2111.03493}. 

In principle, an arbitrary smooth concave exponent $\zeta_p$ passing through the origin could be achieved by a velocity field as we demonstrate in some concrete examples in Subsections~\ref{s:beta} and \ref{s:example}. However, in fully developed turbulence one might expect universal laws for $\zeta_p$ and intermittency dimensions $D_p$, $d_h$.  Mathematically, such universality can be studied by analysing vector fields with randomized Fourier coefficients. To this end, in Subsection~\ref{s:random_field} we compute the expected spectrum for a random vector field normalized in accordance with the Kolmogorov $\frac45$th law: $\E \zeta_3 = 1$. We derive the following bounds for the expected value of $\zeta_p$:
\[
 \frac{p}{3}+ \frac{-\ln \sqrt{\pi} + p\ln 2 + \ln \G( (p+1) / 2 )}{\ln \ell}  \leq \E  \zeta_p \leq  p \left(\frac{1}{3} - \frac{2\ln 2}{3 \ln \ell} \right).
\]
Intermittency dimension $D_p$ for the lower bound is computed to be
\[
D_p = 3 - \frac{-\ln \sqrt{\pi} +\ln \G( (p+1) / 2 ) - \frac{p}{2}\psi((p+1) / 2)}{\ln \ell},
\]
where $\psi(z) = \G'( z )/\G( z )$ is the polygamma function. While randomized fields may exhibit intermittency at finite scales, the classical K41 spectrum holds in the limit as $\ell \to 0$:
\[
\lim_{\ell \to 0} \E \zeta_p = \frac{p}{3},
\]
confirming the heuristics that frequency randomized fields loose concentration and eddies start occupying the whole space. In the particular case of $p=2$, the scaling of the second order structure function becomes $S_2(\ell) \to \ell^\frac23$, as $\ell \to 0$. This can be translated directly to asymptotic behavior of the energy spectrum $E(\k) \to \frac{c_0}{\k^{5/3}}$, as $\k \to \infty$, recovering the classical law of $5/3$-rds. Relationship between the energy spectrum and second structure function will be established for any scaling laws in \prop{p:spec}.

In Section~\ref{s:active} we give an information-theoretic analysis of the field based on the introduce volumetric parameters. The main main idea of this study is to establish 

1) how much information about the vector field $\d_\ell \bu$ is contained in sets of probability measure $V_{p,q}$ and $V_p$;

2) how much information about the vector field $\d_\ell \bu$ is contained in the corresponding sets $A_{q,p}$ and $A_p$ themselves, while their probability measures being asymptotically subordinate to the respective volumes in a manner similar to \eqref{e:AVp}.

One surprising result of this study shows that the dyadic volume factors of type $V_{q,q/2}$ collect most information about the $L^q$-source $|\d_\ell \bu|^q/ \lan  |\d_\ell \bu|^p \ran$ among all the factors $V_{q,p}$, $p<q$. A particular case is given by the classical flatness factor $\cF = V_{4,2}^{-1}$. At the same time, the 1-parameter families $V_p$ and $A_p$'s collect information about the entropy source $F \ln F$, where $F =  |\d_\ell \bu|^p / \lan  |\d_\ell \bu|^p \ran$.

What constitutes and defines the active part of the field $\d_\ell \bu$ can be expressed via an explicit family of threshold speeds:
\begin{equation}\label{}
s_{q,p} = \frac{ \lan |\d_\ell \bu|^q \ran^{\frac{1}{q-p}}}{ \lan |\d_\ell \bu|^p \ran^{\frac{1}{q-p}}},
\end{equation}
and their limiting 1-parameter counterpart
\begin{equation}\label{e:spdefintro}
\lim_{q \ra p} s_{q,p} = s_p=  \exp\left\{  \frac{\lan |\d_\ell \bu|^p \ln |\d_\ell \bu| \ran}{\lan |\d_\ell \bu|^p \ran}\right\}.
\end{equation}
Information about these parameters can be gathered experimentally. Results of this research were reported in  \cite{Ph-paper}.

\section{Intermittency and multifractal spectrum} \label{s:intermittency_and_MFR}

\subsection{Intermittency dimensions $D_p, d_h$ and scaling exponents $\zeta_p$}

For each scale $\ell$ and $p\in \R$ we define
\[
\zeta_p: = \zeta_p(\ell)= \log_{\ell} \lan |\d_\ell \bu|^p \ran.
\]
In other words, $\zeta_p$ is the exponent of the $p$th order structure function,
\begin{equation} \label{eq:zeta_p_d_l}
S_p(\ell) =  \lan |\d_\ell \bu|^p \ran =\ell^{\zeta_p}, \qquad   p \in \mathbb{R}.
\end{equation}
Note that all the exponents pass through the origin,
\[
\zeta_0 = 0.
\]
Although in general $\zeta$'s depend on $\ell$, we will suppress it to simplify the notation.

 Let us define the domain of $\zeta_p$: 
\[
p_{\mathrm{min}} = \inf\{p\leq 0: \zeta_p \text{ is finite}\}, \qquad p_{\mathrm{max}} = \sup\{p\geq 0: \zeta_p\text{ is finite}\}.
\]
Note that the endpoints $p_{\min}$ or $p_{\max}$ might not be be in the domain of $\zeta_p$.
\begin{lemma} \label{l:e:zeta_p}
For $\ell \leq 1$, as a function of $p$, $\zeta_p$ is concave and smooth on $(p_{\mathrm{min}}, p_{\mathrm{max}})$ with
\begin{equation} \label{e:zeta'}
 \zeta'_p = \frac{\lan |\d_\ell \bu|^p \log_\ell |\d_\ell \bu| \ran}{\lan |\d_\ell \bu|^p \ran}.
\end{equation}
\end{lemma}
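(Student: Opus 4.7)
My strategy is to reduce both claims to classical properties of the moment function $F(p):=\lan |\d_\ell \bu|^p\ran$, namely that $\ln F$ is convex on the interior of its domain and that $F$ is smooth there, and then translate these properties to $\zeta_p=\ln F(p)/\ln\ell$ using $\ln\ell<0$ for $\ell<1$ (the case $\ell=1$ being vacuous since the definition of $\zeta_p$ degenerates). Writing $X:=|\d_\ell \bu|$, the representation $X^p=e^{p\ln X}$ shows that $F$ is essentially a moment generating function of the random variable $\ln X$ under the averaging measure $\lan\cdot\ran$, which is the classical setup where log-convexity and smoothness on the interior of the finiteness domain are standard.

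\textbf{Concavity.} For $p,q\in(p_{\min},p_{\max})$ and $\lambda\in[0,1]$, apply H\"older's inequality with conjugate exponents $1/\lambda,\,1/(1-\lambda)$ to the product $X^{\lambda p}\cdot X^{(1-\lambda)q}$:
\[
\lan X^{\lambda p+(1-\lambda)q}\ran \le \lan X^p\ran^\lambda \lan X^q\ran^{1-\lambda}.
\]
Taking logarithms shows $p\mapsto\ln F(p)$ is convex; since $\ln\ell<0$, division by $\ln\ell$ reverses the inequality and yields concavity of $\zeta_p$. In particular $(p_{\min},p_{\max})$ is automatically an interval.

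\textbf{Smoothness and the derivative formula.} To differentiate under the average I need a local integrable majorant for $\p_p X^p=X^p\ln X$. Fix $p_0\in(p_{\min},p_{\max})$ and choose $\varepsilon>0$ with $[p_0-2\varepsilon,p_0+2\varepsilon]\subset(p_{\min},p_{\max})$. For $|p-p_0|\le\varepsilon$, splitting according to $X\le 1$ or $X>1$ gives $X^p\le X^{p_0-\varepsilon}+X^{p_0+\varepsilon}$; combining this with the elementary bounds $|\ln X|\le \varepsilon^{-1}X^{-\varepsilon}$ on $\{X\le 1\}$ and $|\ln X|\le \varepsilon^{-1}X^{\varepsilon}$ on $\{X>1\}$, I get
\[
|X^p\ln X|\le \varepsilon^{-1}\bigl(X^{p_0-2\varepsilon}+X^{p_0+2\varepsilon}\bigr),
\]
which is integrable since both exponents lie in $(p_{\min},p_{\max})$. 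Dominated convergence then yields $F'(p)=\lan X^p\ln X\ran$. Iterating the same idea (shrinking $\varepsilon$ further and absorbing additional logarithmic factors by the same power-log inequality) gives $F\in C^\infty(p_{\min},p_{\max})$, hence so is $\zeta_p$. Finally,
\[
\zeta'_p=\frac{F'(p)}{F(p)\ln\ell}=\frac{\lan X^p\ln X\ran}{\lan X^p\ran \ln\ell}=\frac{\lan X^p\log_\ell X\ran}{\lan X^p\ran},
\]
as claimed.

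\textbf{Main obstacle.} The only non-routine step is the justification of differentiation under $\lan\cdot\ran$; everything else is essentially H\"older plus a sign flip. The key technical input making the domination work is that $p_{\min},p_{\max}$ are defined precisely as the infimum/supremum of the finiteness set of $F$, so any $p_0$ in the open interval admits strictly interior buffer room $\pm 2\varepsilon$ in which to absorb the logarithmic factor via the power-log inequalities above.
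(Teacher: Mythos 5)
Your proof is correct, and it is built on the same two pillars as the paper's (H\"older's inequality for concavity, differentiation under the average for smoothness), but it deploys them somewhat differently. For concavity, the paper applies H\"older separately on $[0,p_{\max})$ and on $(p_{\min},0]$ and then glues the two concave pieces using differentiability at $p=0$; you instead observe that the single H\"older estimate $\lan X^{\lambda p+(1-\lambda)q}\ran\le\lan X^p\ran^{\lambda}\lan X^q\ran^{1-\lambda}$ is valid for \emph{any} $p,q$ in the finiteness domain regardless of sign (the factors $X^{\lambda p}$, $X^{(1-\lambda)q}$ are just nonnegative measurable functions), so log-convexity of the moment function holds globally and no gluing step is needed. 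This is a genuine, if small, simplification. For smoothness, the paper simply asserts that it "follows from the definition," whereas you supply the missing justification: the local majorant $|X^{p}\ln X|\le\varepsilon^{-1}\bigl(X^{p_0-2\varepsilon}+X^{p_0+2\varepsilon}\bigr)$ obtained from the power--log inequalities, which is exactly the point where the openness of $(p_{\min},p_{\max})$ is used, followed by dominated convergence. The derivative formula and the sign flip coming from $\ln\ell<0$ are handled correctly. In short, your argument proves the same lemma with fewer moving parts in the concavity step and with the analytic details of the smoothness step made explicit.
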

\begin{proof} The smoothness follows directly from the definition. To show concavity, first note that $\zeta_p$ is trivially concave in $p$,
\[
\zeta_{\th p_1 + (1-\th)p_2} \geq \th \zeta_{p_1} + (1-\th) \zeta_{p_2}, \quad 0 \leq \theta \leq 1,
\]
by the H\"older inequality on each interval $[0,p_{\mathrm{max}})$ and $(p_{\mathrm{min}},0]$ separately. Combining with the fact that it is also differentiable at $p=0$ shows that $\zeta_p$ is concave globally on the entire interval. 
\end{proof}

The basic idea of the Frisch-Parisi theory consists of relating the scaling exponents $\zeta_p$ to a range of parameters $d_h(\ell)$, called multifractal spectrum (MFR for short), that at a given scale $\ell$ measure the dimension of a set $A_h$ where the field $\bu$ achieves the H\"older exponent $h$. The relation between the MFR and the scaling exponents can be derived heuristically based on an incidence argument, see \cite{Frisch} and is given by Legendre transform 
\begin{equation}\label{e:MFR}
  d_h(\ell)  =\inf_p(3+ hp-\zeta_p).
\end{equation}
Here the infimum is taken over the domain of $\zeta_p$. We will use \eqref{e:MFR} as the definition of $d_h(\ell)$ over domain
\[
[h_{\min}, h_{\max}] = [\inf_p \zeta'_p,\sup_p \zeta'_p],
\]
where again the infimum and supremum are taken over the domain of $\zeta_p$.

We can in fact justify \eqref{e:MFR} rigorously defining  the following active region capturing specific H\"older regularity of order $h$ at a specific scale $\ell$
\begin{equation} \label{e:Ah}
A_h= \{x: c \ell^h \leq |\d_\ell \bu(x)| \leq  C \ell^h \}.
\end{equation}
Here $C,c>0$ are fixed 
constants. Then, by Chebyshev's inequality, for $p \geq 0$ we obtain
\begin{equation}
\lan 1_{A_h} \ran \leq c^{-p} \ell^{-hp}\lan |\d_\ell \bu|^p \ran,
\end{equation}
and for $p<0$, using the upper bound defining $A_h$,
\begin{equation}
\lan 1_{A_h} \ran \leq C^{-p} \ell^{-hp}\lan |\d_\ell \bu|^p \ran.
\end{equation}
Combining with the very definition \eqref{eq:zeta_p_d_l} we obtain
\[
\lan 1_{A_h} \ran \lesssim \ell^{-hp+\zeta_p}, \qquad p \in \R.
\]
Since $\ell$ is small, the optimal bound is obtained at the point of the minimal exponent resulting in 
\begin{equation}\label{e:Ahdh}
\lan 1_{A_h} \ran \lesssim \ell^{-\inf_p(hp-\zeta_p)} =\ell^{3-d_h(\ell)}.
\end{equation}

This rigorous inequality demonstrates that the asymptotic value $d_h = \lim_{\ell \to 0} d_h(\ell)$ captures the fractal dimension of the set of H\"older regularity $h$.

Knowing the exponents $\zeta_p$ or, equivalently, structure functions $S_p$, allows us to compute the MFR according to \eqref{e:MFR}. On the other hand, assuming that we knew the MFR in the first place, its 
practical use can come from inverting formula \eqref{e:MFR} over the domain of $\zeta_p$
\begin{equation}\label{e:MFRinv}
    \zeta_p = \inf_h(3+ ph - d_h(\ell)),
\end{equation}
where the infimum is taken over $[h_{\mathrm{min}},h_{\mathrm{max}}]$.
This gives a direct access to computing scaling exponents $\zeta_p$ from the multifractal spectrum.  It is therefore highly desirable to define the dimensions $d_h(\ell)$ on the first place and in a way that is amenable to analysis.

To this end, we start by defining a family of quantities called volume factors.  Let $-\infty \leq p,q \leq \infty$, $q\neq p$. We define  the \emph{$(q,p)$-volume factor} at scale $\ell$  as follows
\begin{equation}\label{e:vol-intro}
V_{q,p}(\ell) = \frac{ \lan |\d_\ell \bu|^q \ran^{\frac{p}{p-q}}}{ \lan |\d_\ell \bu|^p \ran^{\frac{q}{p-q}}}.
\end{equation}
We will postpone the detailed study of these parameters till later sections. In short, the factors capture the probability measure of the set where ``most'' of the $L^q$-mass of the field is concentrated, see Lemma \ref{l:AV}. As $\ell\to 0$ such ``active'' sets settle on a fractal of certain dimension, which is determined by the exponential rate of decay of the volume factors. So, we further define
\begin{equation} \label{e:Dqp}
    D_{q,p} (\ell) = 3 - \log_\ell V_{q,p}.
\end{equation}
These dimensions can be directly expressed in terms of scaling exponents $\zeta_p$'s via
\begin{equation}\label{e:Dpq2}
D_{q,p}(\ell) = 3-\frac{p\zeta_q - q\zeta_p}{p-q}.
\end{equation}
\begin{remark} A more illuminating role of the dimensions is seen if one rewrites its  definition in the following form
\begin{equation}\label{e:Bern1}
 \lan |\d_\ell \bu|^p \ran^{\frac{1}{p}} = \ell^{(3-D_{p,q})\left(\frac{1}{p}-\frac{1}{q}\right)} \lan |\d_\ell \bu|^q \ran^{\frac{1}{q}}.
\end{equation}
This equality bears direct relation with the Bernstein equality for Littlewood-Paley projections of the field $\bu$, see \cite{Grafakos}. In fact if we replace the velocity increments $\d_\ell \bu$  with the corresponding projections $\bu_k$ to the dyadic shells in the Fourier space around a wavenumber $\l_k = 2^k/L$, where $L$ is the characteristic length scale of the periodic domain $L\T^n$, then the classical Bernstein inequality states that for $p<q$ one has 
\[
\| \bu_k \|_q \leq c \l_k^{3 \left( \frac1p - \frac1q \right) } \|\bu_k\|_p,
\]
where $c>0$ is an adimensional constant.  At the same time, on the periodic domain $\| \bu_k \|_q  \geq c(L) \|\bu_k\|_p$. So, there exists a parameter $D_{q,p}$ which gauges the level of saturation of the inequality:  
\begin{equation}\label{e:Bern2}
\| \bu_k \|_q = \l_k^{(3 - D_{q,p}) \left( \frac1p - \frac1q \right) } \|\bu_k\|_p.
\end{equation}
We can see that \eqref{e:Bern2} is precisely \eqref{e:Bern1} taken with the corresponding spacial scale $\ell  = \l_k^{-1}$.  

 This definition of dimension based on the Bernstein inequality for the particular case $p=2$, $q = 3$ was adopted by the authors in work \cite{CS2014}, which recovered the $\b$-model of intermittency \cite{FSN-beta}, see also \cite{Frisch}. We will come back to this point in \sect{s:beta} and present a similar interpretation based on the finite difference approach taken here. 
\end{remark}

Now, letting $q \to p$ we define a new 1-parameter family of volumes and corresponding dimensions:
\begin{equation}
    \begin{split}
        V_{q,p} \to V_p &= \lan |\d_{\ell} \bu|^p \ran \exp\left\{ - \frac{\lan |\d_{\ell} \bu|^p \ln |\d_{\ell} \bu|^p \ran}{\lan |\d_{\ell} \bu|^p \ran}\right\} \\
        D_{q,p} \to D_p &= 3 - \log_\ell \lan |\d_\ell \bu|^p \ran  +  \frac{\lan |\d_\ell \bu|^p \log_\ell |\d_\ell \bu|^p \ran}{\lan |\d_\ell \bu|^p \ran}.
    \end{split}
\end{equation}
Notice that
\begin{equation} \label{eq:V_p=l^(3-D_p)}
V_p  = \ell^{3-D_p},
\end{equation}
and by virtue of the formula \eqref{e:Dpq2} we obtain the following relationship between the new dimensions $D_p$ and $\zeta_p$'s: 
\begin{equation} \label{eq:dzetadp}
D_p=3- \zeta_p +p \zeta'_p. 
\end{equation}
Recalling the classical fact that the infimum \eqref{e:MFR} occurs at a point $p$ where $h = \zeta'_p$, we conclude that 
\[
D_p(\ell) = d_h(\ell), \quad h = \zeta'_p.
\]

This formula shows the new and old dimensions are in natural correspondence via the monotone change of variables $h = \zeta'_p$. Consequently the range $D_p$ covers the same full multifractal spectrum as $p$ runs through the interval $[p_{\mathrm{min}}, p_{\mathrm{max}}]$, just as $d_h$ does as $h$ runs through its interval of possible values $[h_{\mathrm{min}},h_{\mathrm{max}}]$, with a possible exclusion of the endpoints. Since $\zeta'_p$ is  monotonely decreasing in $p$, the end-point values of $h = \zeta'_p$ are determined by
\begin{equation}\label{e:hends}
h_{\mathrm{min}}=  \zeta'_{p_{\mathrm{max}}} \qquad  \text{and} \qquad  h_{\mathrm{max}}=  \zeta'_{p_{\mathrm{min}}},
\end{equation}
where the values are to be interpreted as limits which always exist, even if infinite, by monotonicity.

Thanks to \eqref{eq:V_p=l^(3-D_p)}, the bound \eqref{e:Ahdh} on the active region $A_h$  can now be expressed in terms of the volume $V_p$:
\begin{equation}
\lan 1_{A_h} \ran \lesssim \ell^{3-d_h}= \ell^{3-D_p} = V_p,
\end{equation}
and hence we shall refer to $V_p$ as an active volume.
Also, the threshold $\ell^h$ determining the active region $A_h$ can be expressed more explicitly using that $h = \zeta_p'$ and formula \eqref{e:zeta'}
\begin{equation}\label{e:spfirst}
\ell^h = \ell^\frac{\lan |\d_\ell \bu|^p \log_\ell |\d_\ell \bu| \ran}{\lan |\d_\ell \bu|^p \ran}= \exp \left\{ \frac{\lan |\d_\ell \bu|^p \ln |\d_\ell \bu| \ran}{\lan |\d_\ell \bu|^p \ran} \right\}= :s_p.
\end{equation}
These quantities $s_p$ determine another 1-parameter family of values which determine ``active thresholds" intimately tied with the other volumetric quantities introduced above. We will address this systematically  in  Section~\ref{s:active}.

By analogy with the relation \eqref{e:MFRinv} we can now use $D_p$ as a new determining family of dimensions. In fact solving the simple ODE \eqref{eq:dzetadp} for $\zeta_p$ we obtain, for all $p\in (p_{\mathrm{min}}, p_{\mathrm{max}})$,
\begin{equation}\label{e:zetaDp}
    \zeta_p = p \zeta_0' + p \int_0^p \frac{D_s-3}{s^2} \ds.
\end{equation}
So the relationship between $\zeta_p$'s and $D_p$'s is purely differential, given by \eqref{eq:dzetadp}, \eqref{e:zetaDp}, while the relationship between $\zeta_p$'s and the old dimensions $d_h$ is variational \eqref{e:MFR}, \eqref{e:MFRinv}.

Formula \eqref{e:zetaDp} shows explicitly the intermittency correction to the classical linear profile expressed in terms of the new dimensions
\begin{equation}\label{e:Ip}
    \cI_p = p \int_0^p \frac{D_s-3}{s^2} \ds.
\end{equation}
In particular, if $D_p \equiv 3$ as in the Kolmogorov regime, then $\cI_p = 0$ and we recover the classical theory of non-intermittent turbulence.

Interestingly, while $D_p = D_{p,p}$ define only a particular subfamily dimensions $D_{q,p}$, the full 2-parameter family can be restored from its diagonal values $D_p$. Indeed, plugging \eqref{e:zetaDp} into \eqref{e:Dpq2} we obtain
\begin{equation}
    D_{q,p} = 3 + \frac{qp}{q-p} \int_p^q \frac{D_s - 3}{s^2} \ds.
\end{equation}
This can be further simplified if $p$ and $q$ have the same sign by splitting the integral:
\begin{equation}
    D_{q,p} = \frac{qp}{q-p} \int_p^q \frac{D_s}{s^2} \ds, \quad qp >0.
\end{equation}

\subsection{Functional properties of dimensions} \label{s:Functional_properties} Now that we have defined the multifractal spectrum, it is desirable to understand the range of possible H\"older exponents $[h_{\mathrm{min}},h_{\mathrm{max}}]$ and other graph properties of the dimensions.

First, directly from the definition it is clear that $D_p$ is smooth on $(p_{\mathrm{min}}, p_{\mathrm{max}})$. A more explicit formula for $D_p$ is given by
\begin{equation}\label{e:Dpu}
D_p = 3 - \log_\ell \lan |\d_\ell \bu|^p \ran  +  \frac{\lan |\d_\ell \bu|^p \log_\ell |\d_\ell \bu|^p \ran}{\lan |\d_\ell \bu|^p \ran}.
\end{equation}
In particular, $D_0 = 3$. We also have
\[
 D'_p = p \zeta''_p,
\]
which in view of concavity, $ \zeta''_p \leq 0$, implies that $D'_p \geq 0$ for $p\leq 0$, and $D'_p \leq 0$ for $p \geq 0$.  Consequently, $D_p$ attains its  maximum at the origin.

As a function of $p$ the dimensions $D_p$ are generally not concave, see Figure~\ref{f:case4}. However, as a function of H\"older exponent $h$, the dimensions $d_h$ are obviously concave from the general properties of the Legendre transform. In fact, if $p=p(h)$ is the point where the infimum in \eqref{e:MFR} is attained, which is the inverse of the decreasing function $h = \zeta'_p$, then $p(h)$ is decreasing as well. Moreover, we have
\[
d'_h = p(h), \quad d''_h = p'(h) = \frac{1}{\zeta''_p} \leq 0, \quad h\in(\hmin,\hmax),
\]
but this might not hold at the endpoints $h_{\min}$ or $h_{\max}$ as we will see below. 
We also see that negative $p$'s define the interval of decreasing behavior for $d_h$ and positive values define the interval of the increase. In terms of $h$, this means that $d_h$ is increasing on  the interval $(h_{\min}, \zeta'_0]$, picks at value $d_{\zeta'_0} =  D_0 = 3$, and decreases on $[\zeta'_0, h_{\max})$, see Figure~\ref{f:case1}.

\begin{figure}
\centering
	\includegraphics[width=5in]{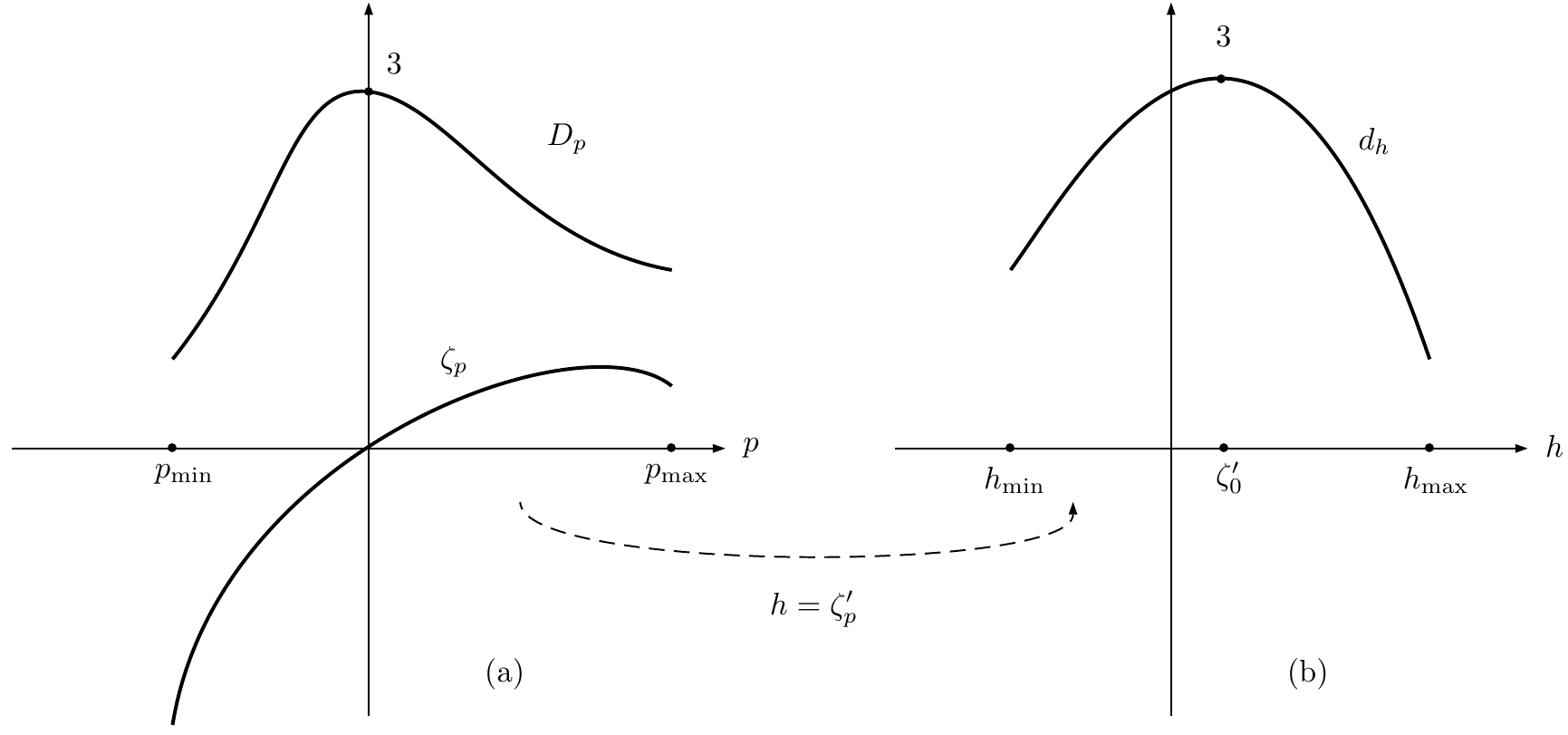}
	\caption{(a) generic graphs of $D_p$ and $\zeta_p$; (b) generic graph of $d_h$. This also corresponds to case (1) of the end-point behavior. }
	\label{f:case1}
\end{figure}

\begin{figure}
\centering
	\includegraphics[width=5in]{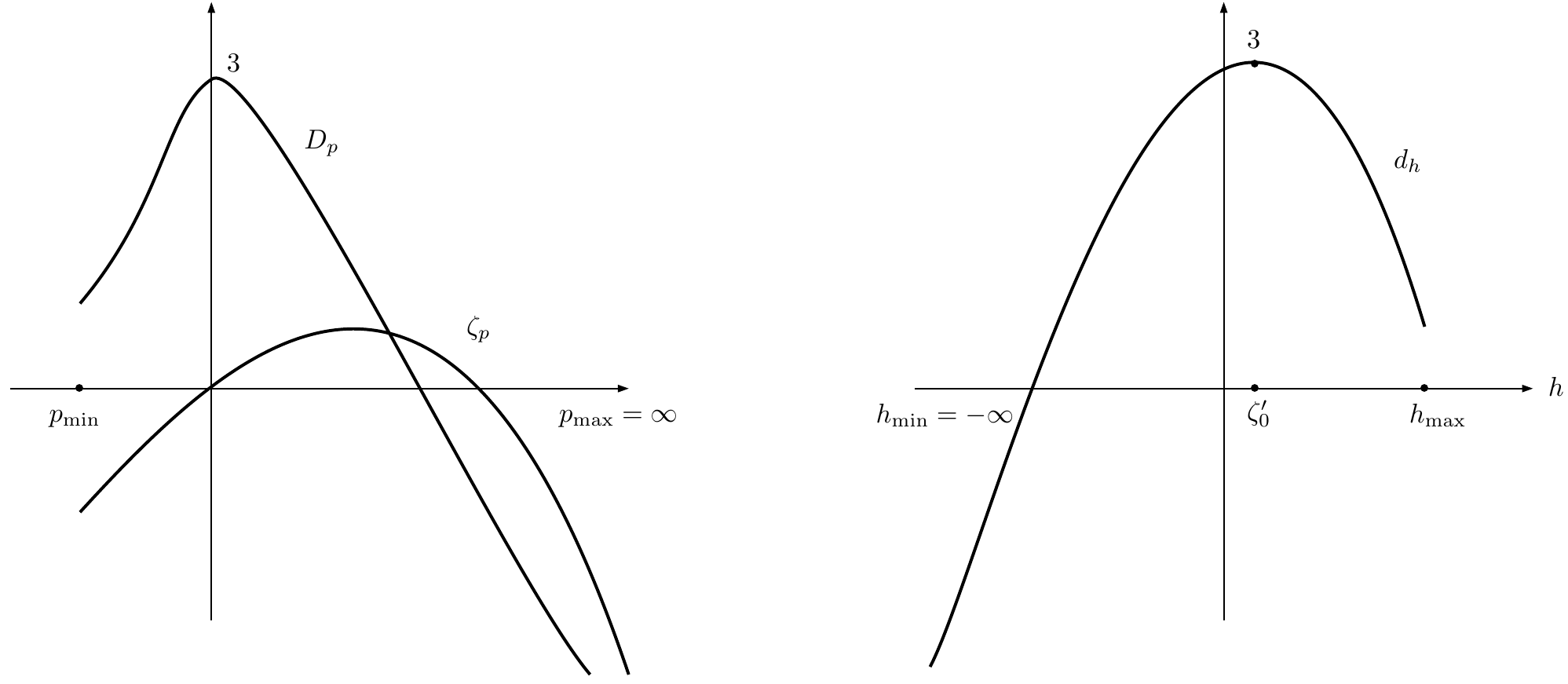}
	\caption{Case (2) of the end-point behavior. }
	\label{f:case2}
\end{figure}

\begin{figure}
\centering
	\includegraphics[width=5in]{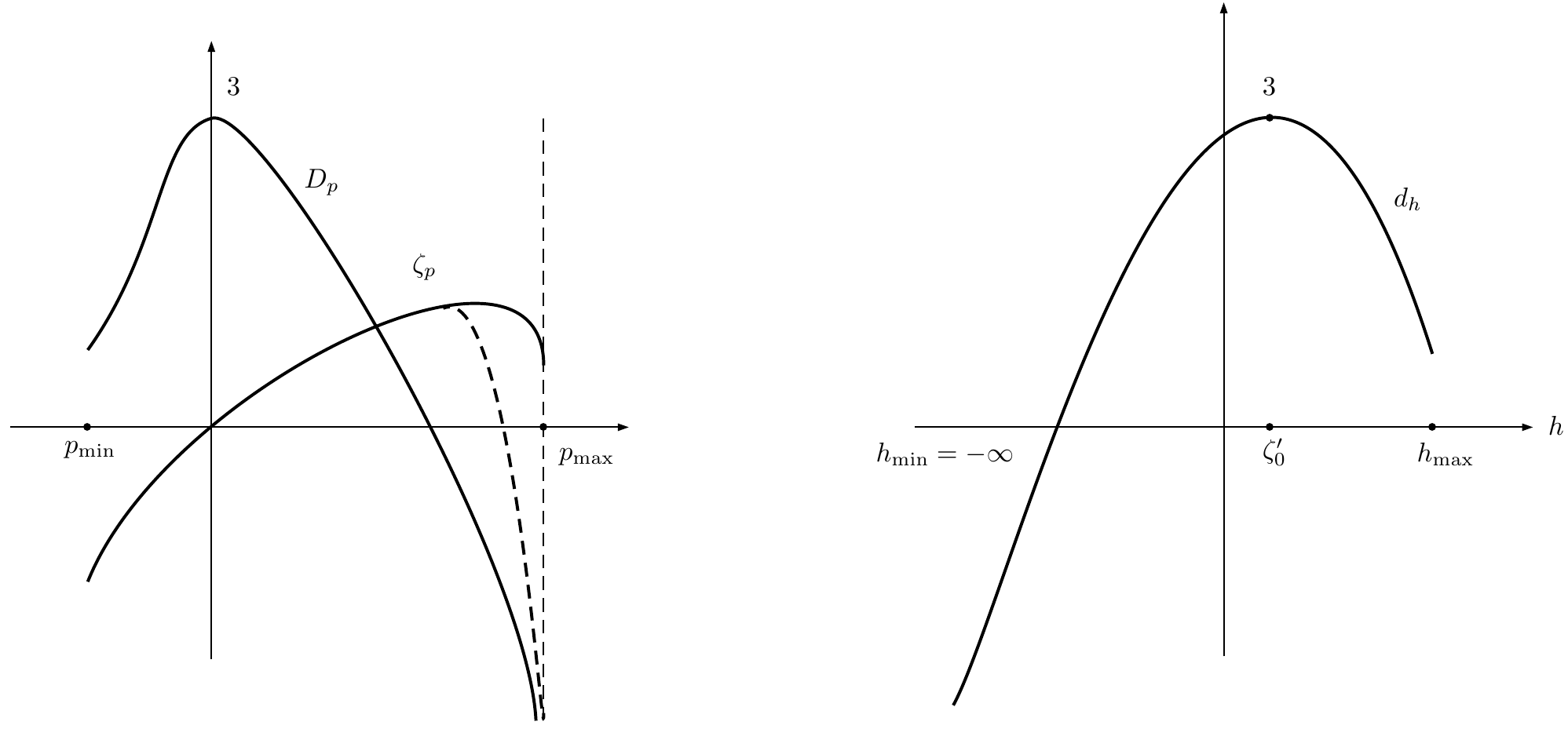}
	\caption{Case (3) of the end-point behavior. }
	\label{f:case3}
\end{figure}

\begin{figure}
\centering
	\includegraphics[width=5in]{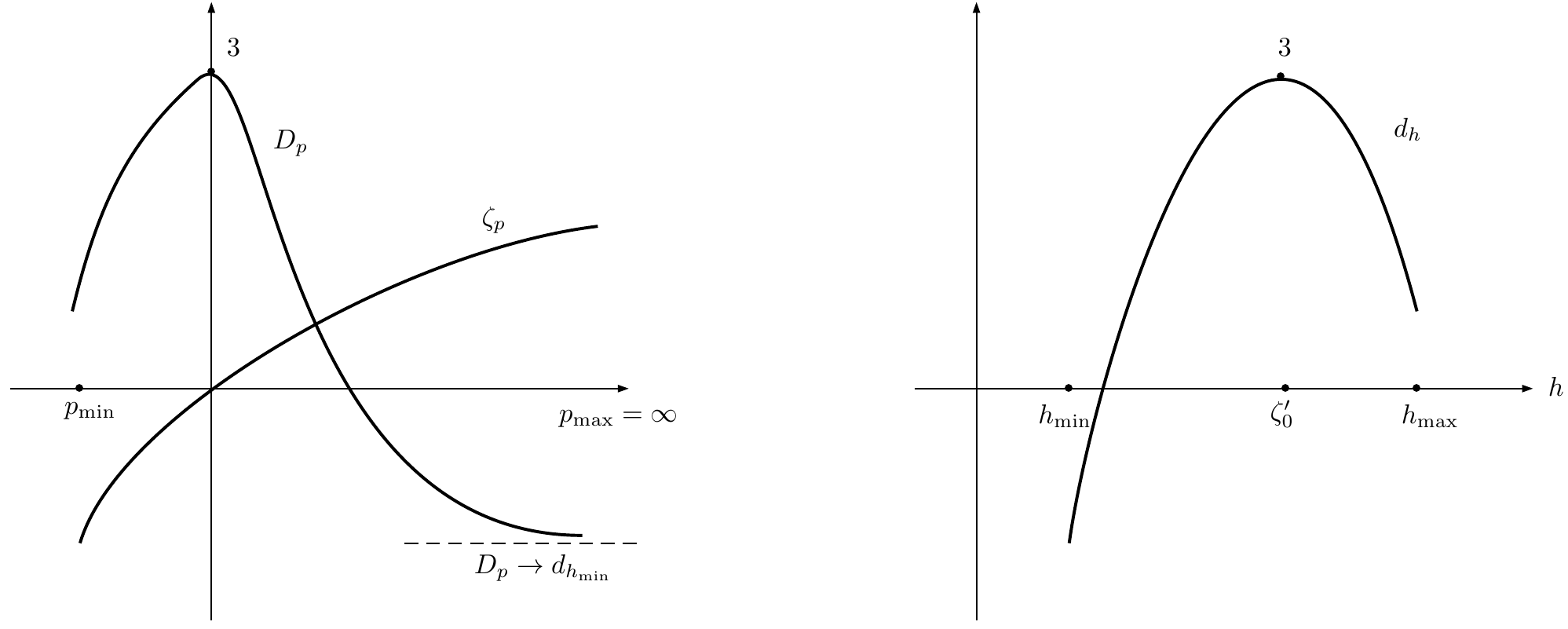}
	\caption{Case (4) of the end-point behavior portraying the situation when $d_{h_{\min}}$ is finite. Note that in this case $D_p$ has the horizontal asymptote at $d_{h_{\min}}$. }
	\label{f:case4}
\end{figure}

\begin{figure}
\centering
	\includegraphics[width=5in]{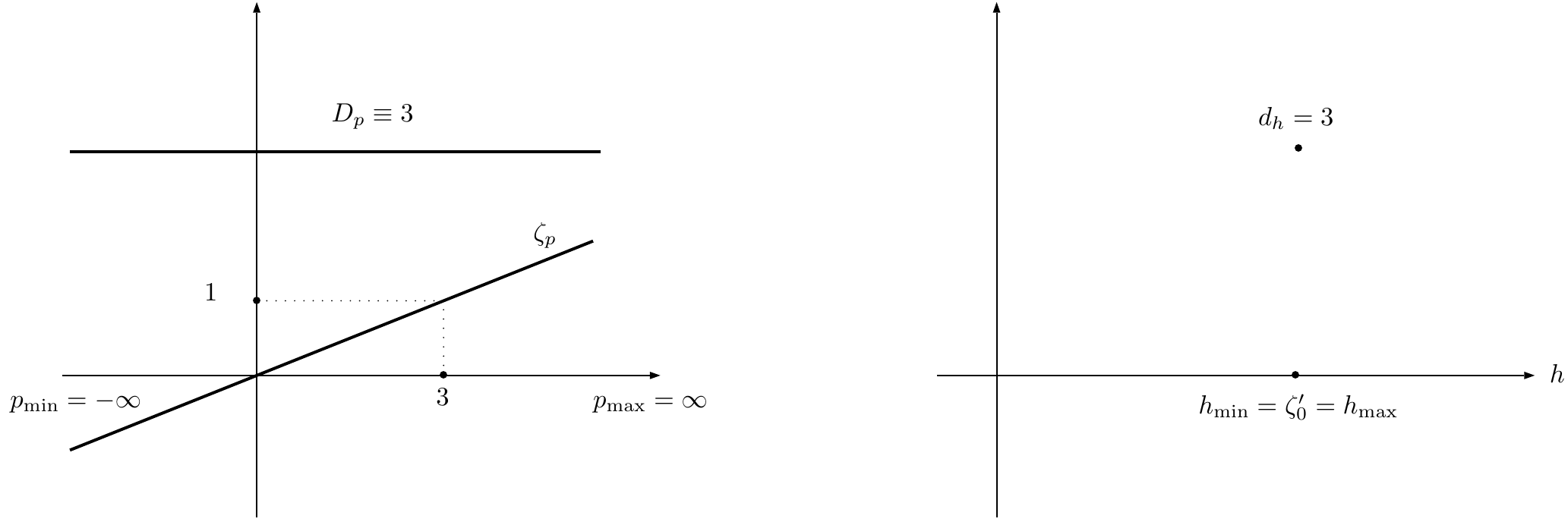}
	\caption{Case (5) represents the classical Kolmogorov-41 regime of turbulence.  }
	\label{f:case5}
\end{figure}

Now we turn to the analysis of two endpoints $(p,h)=(p_{\max} , h_{\min} )$ and $(p,h)=(p_{\min} ,h_{\max} )$. For each of them there are five possible scenarios. We will only describe them for $(p_{\max} , h_{\min} )$ as for the other end point the corresponding cases can be obtained by interchanging $\min$ and $\max$.
\begin{enumerate}

    \item $p_{\max}$ and $h_{\min}$ are finite. Then $\zeta'_{p_{\max}}=h_{\min}$, $\zeta_{p_{\max}}$, $d_{h_{\min}}$, and $d'_{h_{\min}}=p_{\max}$ are all finite, see Figure~\ref{f:case1}.
    
    \item $p_{\max}= \infty$ and $h_{\min} =-\infty$. In this case $\zeta'_{p_{\max}}=-\infty$, $\zeta_{p_{\max}} = - \infty$, and $d_{h_{\min}} =d'_{h_{\min}}= -\infty$, see Figure~\ref{f:case2}.
    
    \item $p_{\max}$ is finite but $h_{\min} =-\infty$. In this case $\zeta'_{p_{\max}}=-\infty$, but $d'_{h_{\min}}=p_{\max}$ is finite. There are no restrictions on  $\zeta_{p_{\max}} \in [-\infty, \infty)$ and $d_{h_{\min}} \in [-\infty, 3]$. If $p_{\max}>0$, then $d_{h_{\min}} = - \infty$. See Figure~\ref{f:case3}.

    \item $p_{\max}= \infty$, $h_{\min}$ is finite, but $d'_{\hmin}=\infty$. In this case $\zeta'_{p_{\max}}=h_{\min}$ is finite. There are no restrictions on $\zeta_{p_{\max}}\in[-\infty, \infty]$ and $d_{h_{\min}} \in [-\infty, 3]$, see Figure~\ref{f:case4}.

\item $p_{\max}= \infty$ but $h_{\min}$ and $d'_{\hmin}$ are finite. This implies K41 regime as we prove below. More precisely, $\hmin=\hmax=\zeta_0'$, $\zeta_p = \zeta_0' p$, $D_p=3$ for all $p \in \mathbb{R}$, and $d_{\zeta_0'} =3$. Moreover, if $\zeta_3=1$, then $\zeta_0' = 1/3$. See see Figure~\ref{f:case5}.

\end{enumerate}

Remarkably, the so called {\em linearization phenomenon} occurs in case (4) as $\zeta_p$ converges to an asymptote  $\zeta'_{p_{\max}}=h_{\min}$, which is consistent with some experimental observations and numerical computations \cite{Lashermes2004LimitationOS, faller_geneste_chaabo}.

\begin{lemma}
Suppose that $p_{\max}= \infty$, $h_{\min}>-\infty$ and $d'_{\hmin}<\infty$. Then the conclusions of case (5) follow - the fluid is in the classical K41 regime.
\end{lemma}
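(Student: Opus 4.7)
The plan is to exploit the real-analyticity of $\zeta_p$ in $p$ together with a rigidity principle that forces eventually affine behavior to propagate globally. Set $p_\ast := d'_{h_{\min}} < \infty$. By Legendre duality, $d'_h = p(h) := (\zeta')^{-1}(h)$ is defined for $h \in (h_{\min}, h_{\max}]$, non-increasing in $h$, and satisfies $p(h) \to p_\ast$ as $h \searrow h_{\min}$. Suppose for contradiction that $\zeta'_{p_0} > h_{\min}$ for some $p_0 > p_\ast$, and set $h_0 := \zeta'_{p_0}$. By monotonicity of $p(\cdot)$, $p(h) \geq p(h_0) \geq p_0$ for every $h \in (h_{\min}, h_0)$, contradicting $\lim_{h \to h_{\min}^+} p(h) = p_\ast$. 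Hence $\zeta'_p \equiv h_{\min}$ on $[p_\ast, \infty)$, and there is a constant $C$ with $\zeta_p = h_{\min} p + C$ on this interval.

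Next I would upgrade the smoothness provided by \lem{l:e:zeta_p} to real-analyticity. The function $p \mapsto \lan |\d_\ell \bu|^p \ran = \lan e^{p \ln |\d_\ell \bu|} \ran$ is the moment generating function of the scalar random variable $\ln|\d_\ell \bu|$ under the averaging measure. Since it is finite on the open interval $(p_{\min}, \infty)$, standard differentiation-under-the-integral arguments show that it extends analytically to an open complex strip around this interval. Positivity then permits taking a logarithm, so $\zeta_p$ is real-analytic on the connected real interval $(p_{\min}, \infty)$.

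Now I apply the identity principle: since $\zeta_p$ and the affine function $p \mapsto h_{\min} p + C$ agree on the non-degenerate subinterval $[p_\ast, \infty)$ of their common connected analytic domain, they must coincide on all of $(p_{\min}, \infty)$. The normalization $\zeta_0 = 0$ pins $C = 0$, so $\zeta_p = h_{\min} p$ throughout, with $\zeta'_0 = h_{\min}$. Substituting into \eqref{eq:dzetadp} yields $D_p \equiv 3$; constancy of $\zeta'_p$ gives $h_{\min} = h_{\max} = \zeta'_0$; and $d_{\zeta'_0} = \inf_p (3 + \zeta'_0 p - \zeta'_0 p) = 3$. The final claim $\zeta_3 = 1 \Rightarrow \zeta'_0 = 1/3$ is then immediate from $\zeta_3 = 3 \zeta'_0$.

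The main obstacle is the analyticity upgrade in the second step. Without it, a merely $C^\infty$ concave function $\zeta_p$ could be constructed by gluing a strictly concave piece on $(p_{\min}, p_\ast)$ to a linear piece on $[p_\ast, \infty)$ — satisfying all hypotheses of the lemma yet genuinely violating K41. The moment-generating-function representation of $\lan |\d_\ell \bu|^p \ran$ is precisely what rules out this pathology and makes the identity principle available.
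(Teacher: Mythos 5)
Your argument is correct in substance, but it replaces the paper's key rigidity step with a genuinely different one. Both proofs begin the same way: convex (Legendre) duality plus the finiteness of $d'_{\hmin}$ forces $\zeta_p$ to be affine with slope $\hmin$ for all $p\geq d'_{\hmin}$. To propagate this linearity to the whole domain the paper argues pointwise: writing $\zeta''_p$ as a multiple of the variance of $\ln|\d_\ell\bu|$ under the tilted measure $|\d_\ell\bu|^p\dmu/\lan|\d_\ell\bu|^p\ran$, the vanishing of $\zeta''_p$ at a single $p$ is the equality case of the Cauchy--Schwarz inequality, which forces $|\d_\ell\bu|$ to be constant, whence $\zeta_p$ is linear on all of $\R$. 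You instead invoke real-analyticity of the moment generating function $p\mapsto\lan e^{p\ln|\d_\ell\bu|}\ran$ on the interior of its domain of finiteness together with the identity principle. Both routes are valid and both exploit the same underlying structure (that $\zeta_p$ is a log-MGF); the paper's version is more elementary, needs only one point where $\zeta''_p=0$, and delivers the stronger structural conclusion that the increment has constant modulus. The one loose end in your version: the identity principle only yields $\zeta_p=\hmin p$ on $(\pmin,\infty)$, whereas case (5) asserts $\zeta_p=\zeta_0'p$ and $D_p=3$ for \emph{all} $p\in\R$, which implicitly requires $\pmin=-\infty$. To close this you still need the observation that an MGF equal to $e^{cp}$ on an open interval can only arise from an almost surely constant variable (uniqueness of a distribution with prescribed MGF on an open interval); that recovers the paper's conclusion that $|\d_\ell\bu|$ is constant and hence the finiteness and linearity of $\zeta_p$ for every real $p$. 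Your closing remark about smoothly glued concave counterexamples is a good sanity check: it correctly identifies why some rigidity input beyond $C^\infty$ concavity is indispensable, and the paper's Cauchy--Schwarz equality argument is precisely the elementary form of that input.
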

\begin{proof} Indeed, in this case $d_{h_{\min}}$ has to be finite, and
then the exponents $\zeta_p$ become linear in $p$, $\zeta_p = 3+ p \hmin - d_{\hmin}$ starting from $p \geq d'_{\hmin}$. This is because the infimum runs out of domain and is attained at the end-point $h = \hmin$. 
Also, $D_p = d_{h_{\min}}$ for all $p \geq d'_{\hmin}$ in this case.
If such perfect linearization of $\zeta_p$ indeed occurs, then $\zeta_p''=0$ for $p \geq d'_{\hmin}$. Then
\[
\begin{split}
\zeta''_p =\frac{1}{\ln \ell} \left[\frac{\lan |\d_\ell \bu|^p (\ln |\d_\ell \bu|)^2 \ran}{ \lan |\d_\ell \bu|^p \ran}  - \frac{\lan |\d_\ell \bu|^p \ln |\d_\ell \bu| \ran^2}{ \lan |\d_\ell \bu|^p \ran^2}  \right] =0.
\end{split}
\]
Hence
\[
\lan |\d_\ell \bu|^{p/2} \ln |\d_\ell \bu| |\d_\ell \bu|^{p/2} \ran^2 = \lan |\d_\ell \bu|^p (\ln |\d_\ell \bu|)^2 \ran \lan |\d_\ell \bu|^p \ran,
\]
and the H\"older inequality becomes equality, in which case either $\d_\ell \bu \equiv 0$ or $|\d_\ell \bu|^{p/2} \ln |\d_\ell \bu|$ is proportional to 
$|\d_\ell \bu|^{p/2}$, and hence $|\d_\ell \bu|=constant$. Then $\zeta''_p=0$ for all $p \in \mathbb{R}$, which immediately implies the conclusions of (5).
\end{proof}

\subsection{Finer description of the H\"older spectrum}

In addition to the defining formulas \eqref{e:hends} it is useful to relate the H\'older exponents to the values of $\zeta_p$'s themselves.
\begin{lemma}\label{l:hminmax}
We have the following general inequalities
\begin{equation}\label{e:hratios}
 \hmin \leq  \frac{\zeta_q-\zeta_p}{q-p} \leq \hmax, \quad \forall p, q\in ( \pmin,\pmax).
 \end{equation}
Furthermore, if $\pmax = \infty$ or $\zeta_{\pmax} = - \infty$, then 
\begin{equation}\label{e:hmin=}
\hmin = \lim_{p \to \pmax} \frac{\zeta_p}{p}.
\end{equation}
Respectively, if $\pmin = - \infty$ or $\zeta_{\pmin} = - \infty$, then 
\begin{equation}\label{e:hmax=}
\hmax = \lim_{p \to \pmin} \frac{\zeta_p}{p}.
\end{equation}
\end{lemma}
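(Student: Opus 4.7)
The plan is to exploit that, by Lemma \ref{l:e:zeta_p}, $\zeta_p$ is smooth and concave on $(\pmin,\pmax)$ with $\zeta_0 = 0$, so $\zeta'_p$ is monotonically decreasing and $\hmin, \hmax$ are its one-sided limits at the endpoints of the domain.

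For \eqref{e:hratios}, the mean value theorem gives $\xi \in (p,q)$ with $(\zeta_q - \zeta_p)/(q-p) = \zeta'_\xi$, which lies in $[\hmin,\hmax]$ by the very definitions $\hmin = \inf_p \zeta'_p$ and $\hmax = \sup_p \zeta'_p$. Equivalently, concavity directly sandwiches the chord slope between $\zeta'_q$ and $\zeta'_p$.

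For \eqref{e:hmin=}, I would use $\zeta_0 = 0$ to write
\[
\frac{\zeta_p}{p} = \frac{1}{p}\int_0^p \zeta'_s\,ds
\]
and treat two subcases. If $\pmax = \infty$, then $\zeta'_s$ is decreasing with limit $\hmin$ (possibly $-\infty$) as $s \to \infty$, and a standard Ces\`aro-averaging argument shows the running average converges to the same limit; the case $\hmin = -\infty$ requires only a short $\varepsilon$-$s_0$ estimate. If instead $\pmax < \infty$ and $\zeta_{\pmax} = -\infty$, I would first rule out $\hmin$ being finite: the inequality $\zeta'_s \geq \hmin$ would force $\zeta_p \geq \hmin\cdot p$ to remain bounded below as $p\to\pmax$, contradicting $\zeta_{\pmax}=-\infty$. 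Hence $\hmin = -\infty$, and since $\pmax > 0$ the ratio $\zeta_p/p$ also tends to $-\infty$. The bound \eqref{e:hmax=} is obtained by the symmetric argument on $(\pmin,0]$, where $\zeta'_p$ is still decreasing and its supremum $\hmax$ is attained in the limit $p\to\pmin$.

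The main obstacle is the degenerate subcase $\pmax < \infty$ with $\zeta_{\pmax} = -\infty$, where the contradiction argument above is needed to pin down $\hmin = -\infty$; once $\hmin$ is identified as the monotone limit of $\zeta'_s$, the Ces\`aro step is entirely routine.
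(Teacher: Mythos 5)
Your proposal is correct and follows essentially the same route as the paper: both arguments rest on writing the chord slope and the ratio $\zeta_p/p$ as averages of the monotone derivative $\zeta'_s$ and then squeezing (your mean-value-theorem step for \eqref{e:hratios} is equivalent to the paper's integral representation). Your explicit handling of the subcase $\pmax<\infty$, $\zeta_{\pmax}=-\infty$ merely fills in a detail the paper states tersely.
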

\begin{proof} To prove \eqref{e:hratios} simply note that
\[
\frac{\zeta_q-\zeta_p}{q-p} = \frac{1}{q-p}\int_p^q \zeta'_s \ds.
\]
The integral, by monotonicity of $\zeta'_s$, is bounded from above and below by $\hmax$ and $\hmin$, respectively.

To prove \eqref{e:hmin=} by concavity and simple integration we have
\begin{equation}\label{e:zetaave}
\zeta'_p \leq \frac{\zeta_p}{p} = \frac{1}{p} \int_0^p \zeta'_s \ds, \qquad \forall p\in [0,\pmax).
\end{equation}
If $\pmax =\infty$, then passing to the limit in \eqref{e:zetaave} we can see that both sides squeeze to the same limit $\zeta'_\infty = \hmin$.  If $\pmax <\infty$ but $\zeta_{\pmax} = - \infty$, this means that both sides converge to $ \hmin = - \infty$. 

The proof at the opposite end goes entirely similar.
\end{proof}

By taking $q=0$ and $p=\pmax$ or $p=\pmin$, respectively, we in general obtain
inequalities 
\[
\hmin \leq  \lim_{p \to \pmax} \frac{\zeta_p}{p}, \qquad \hmax \geq \lim_{p \to \pmin} \frac{\zeta_p}{p}.
\]
A pair of exact identities with dimensional corrections can be obtained by solving for $\zeta'_p = h$ in \eqref{eq:dzetadp}:
\[
\hmin = \lim_{p \to p_{\max}}\left(\frac{ \zeta_p}{p} - \frac{3-D_p}{p} \right), \qquad \hmax = \lim_{p \to p_{\min}}\left(\frac{ \zeta_p}{p} - \frac{3-D_p}{p} \right),
\]
One can use these to derive further finer properties of the end-point behavior of the spectra. For example, in case (4), i.e. if  $p_{\max} = \infty$ and $\hmin$ is finite, there are two possibilities in terms of $d_{\hmin}$ being either finite or infinite. If it is finite, as depicted on Figure~\ref{f:case4}, then clearly $D_p$ has to have a horizontal asymptote at $D_p \to d_{\hmin}$. However, even if $d_{\hmin} = \infty$, the graph of $D_p$ would still have a zero slope at infinity. Indeed, in this case 
\[
\hmin = \lim_{p \to \infty}\left(\frac{ \zeta_p}{p} - \frac{3-D_p}{p} \right) = \hmin - \lim_{p \to \infty} \frac{ D_p}{p} ,
\]
and hence
\[
\lim_{p \to \infty} \frac{ D_p}{p} =0.
\]

In those cases where we do have the exact relations \eqref{e:hmin=} and \eqref{e:hmax=}, the full H\"older spectrum of the field can also be recovered through the intermittency correction formula \eqref{e:zetaDp}. We have
\[
\begin{split}
\hmin & = \zeta_0' + \int_0^{\pmax} \frac{D_s-3}{s^2} \ds, \\
\hmax & = \zeta_0' - \int_{\pmin}^0 \frac{D_s-3}{s^2} \ds.
\end{split}
\]
Consequently, the full width of the spectrum is given by the total intermittency correction
\[
\hmax - \hmin =  \int_{\pmin}^{\pmax} \frac{3 - D_s}{s^2} \ds.
\]

Formulas \eqref{e:hmin=} and \eqref{e:hmax=} also allow to express the end-point H\"older exponents in terms of the maximum  and minimum of the field itself.
\begin{corollary}\label{c:endpoints}
If $\pmax = \infty$, then the minimal H\"older exponent is given by
\[
\hmin = \log_{\ell} \|\d_\ell \bu \|_\infty.
\]
If $\pmin = - \infty$, then the maximal  H\"older exponent is given by
\[
\hmax = \log_\ell \min |\d_\ell \bu|.
\]
\end{corollary}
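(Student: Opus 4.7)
The plan is to deduce both identities from the limit formulas \eqref{e:hmin=} and \eqref{e:hmax=} already established in Lemma \ref{l:hminmax}, combined with the classical fact that $L^p$-norms on a probability average converge to the $L^\infty$-norm (respectively, to the essential infimum as $p \to -\infty$). No new structural ingredient seems necessary; the work is simply to push a limit through $\log_\ell$.

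For the first assertion, assume $\pmax = \infty$. By \eqref{e:hmin=},
\[
\hmin = \lim_{p \to \infty} \frac{\zeta_p}{p} = \lim_{p \to \infty} \log_\ell \lan |\d_\ell \bu|^p \ran^{1/p},
\]
where the second equality is immediate from the definition of $\zeta_p$. Since $\ell \in (0,1)$, the function $\log_\ell$ is continuous and monotone on $(0,\infty)$ (extended by $\log_\ell 0 = +\infty$), so it suffices to identify the limit inside. The classical $L^p \to L^\infty$ convergence for a nonnegative measurable function with respect to the probability measure underlying $\lan \cdot \ran$ yields $\lan |\d_\ell \bu|^p \ran^{1/p} \to \|\d_\ell \bu\|_\infty$, and applying $\log_\ell$ delivers $\hmin = \log_\ell \|\d_\ell \bu\|_\infty$.

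For the second assertion, assume $\pmin = -\infty$ and argue analogously using \eqref{e:hmax=}. The substitution $q = -p$ and $g = 1/|\d_\ell \bu|$ recasts the average as
\[
\lan |\d_\ell \bu|^p \ran^{1/p} = \lan g^q \ran^{-1/q} = \|g\|_q^{-1},
\]
valid since the hypothesis $\pmin = -\infty$ forces $\zeta_p$ finite for every $p < 0$, i.e.\ $g \in L^q$ for all $q > 0$. As $q \to \infty$, $\|g\|_q \to \|g\|_\infty = (\essinf |\d_\ell \bu|)^{-1}$ by the same $L^p \to L^\infty$ fact, so $\lan |\d_\ell \bu|^p \ran^{1/p} \to \essinf |\d_\ell \bu|$, and $\log_\ell$ again passes through by continuity.

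The only mild obstacle is bookkeeping for the extreme case $\essinf |\d_\ell \bu| = 0$: here $\|g\|_q \to +\infty$, hence $\lan |\d_\ell \bu|^p \ran^{1/p} \to 0$ and $\log_\ell 0 = +\infty$, which matches the value $\hmax = +\infty$ predicted by \eqref{e:hmax=} under the extended-real interpretation used throughout Lemma \ref{l:hminmax}. All other cases (finite essential infimum; finite or infinite $\|\d_\ell \bu\|_\infty$) are covered uniformly by the continuity of $\log_\ell$ on $[0,\infty]$.
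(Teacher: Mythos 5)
Your proof is correct and follows exactly the route the paper intends: the corollary is stated as an immediate consequence of the limit formulas \eqref{e:hmin=} and \eqref{e:hmax=} from Lemma~\ref{l:hminmax} combined with the standard convergence of $L^p$-norms to the $L^\infty$-norm (resp.\ essential infimum) over a probability measure. Your extra care with the degenerate cases ($\|\d_\ell \bu\|_\infty=\infty$, $\essinf|\d_\ell\bu|=0$) is consistent with the extended-real conventions used throughout the paper.
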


In particular, if the solution belongs to the largest scaling invariant space $\bu \in B^{-1}_{\infty,\infty}$, then $| \d_\ell \bu| \lesssim \ell^{-1}$, and so we have a limitation on the minimal H\"older exponent
\[
h_{\min} \geq -1.
\]

Looking at the other end, it is natural for exponent to stretch all the way to $\hmax = \infty$ because the flow at any given scale may have regions of infinite smoothness or even analyticity. The likelihood of this scenario is seen from the fact that fluctuations have mean zero, $\lan \d_\ell \bu \ran = 0$. So, for example, for parallel shear flows it necessarily implies that $\d_\ell \bu$ must vanish somewhere, and hence $\hmax = \infty$ according to the corollary above.

\subsection{Under the Kolmogorov $\frac45$th law: recovery of the $\b$-model}\label{s:beta}

It is observed almost universally in experiments and simulations that the exponent of the third structure function is $\zeta_3 = 1$. This in part is evidenced by the Kolmogorov $\frac45$th law which can be derived from the first principles, see \cite{Frisch}, 
\begin{equation}\label{e:K45th} 
S^{\parallel}_3(\ell) = \lan   (\d_\ell \bu \cdot \ell/|\ell| )^3 \ran =   - \frac45 \e \ell,
\end{equation}
where $\e$ is the energy dissipation rate per unit mass.  So, assuming that $\zeta_3 = 1$, formula \eqref{e:Dpq2} implies the famous relation between the structural exponents $\zeta_p$'s and the corresponding dimensions under the $\beta$-model formalism (compare with Frisch's  \cite[(8.31)]{Frisch}):
\begin{equation}\label{e:zetabeta}
	\zeta_p = \frac{p}{3} +(3- D_{p,3})\left( 1 - \frac{p}{3} \right).
\end{equation}
One particular case, $p=2$, reads
\begin{equation}
	\zeta_2 = \frac{2}{3} + \frac13(3- D_{2,3}).
\end{equation}

In the situation when the flow is  self-similar, i.e. $\zeta_2(\ell) = \zeta_2$, we obtain the corresponding correction to the energy spectrum (compare with \cite[(8.32)]{Frisch}):
\begin{equation}
	E(\k) \sim \frac{1}{\k^{ \frac{5}{3} + \frac13(3- D_{2,3})}}.
\end{equation}
In fact, such connection between the scaling of the second order structure function and that of the energy spectrum is not entirely elementary, and we will prove this rigorously in Proposition~\ref{p:spec} below.

The dependence of scaling exponent $h = \zeta'_p$ on $p$ can be derived from \eqref{e:zetabeta},
\begin{equation}
	h = \frac13 - \frac{3-D_{p,3}}{3} - \frac{3-p}{3} D'_{p,3}.
\end{equation} 
This recovers formula \cite[(8.29)]{Frisch} obtained under the assumption of $p$-independent dimension. Note that the value $h = \frac13$ is a part of this spectrum in view of Lemma~\ref{l:hminmax}.

To summarize, the entire set of predictions of the $\beta$-model becomes embedded into the formalism based on the two-parameter spectrum $\{D_{p,q}\}$ in the special case  $q = 3$.

Our last observation concerns the content of the H\"older spectrum $I = [h_{\min},h_{\max}]$.   Dividing \eqref{e:zetabeta} by $p$ and  sending $p \to \pmax$ and $\pmin$ we obtain from \lem{l:hminmax},
\begin{equation}
	\begin{split}
h_{\min} &\leq  \frac{D_{\pmax,3} - 2}{3}, \\
h_{\max} &\geq \frac{D_{\pmin,3} - 2}{3}.
	\end{split}
\end{equation} 
It follows from \lem{l:V6} below that $D_{p,3}$ is a decreasing function of $p$. Hence, the entire range 
\[
 \left\lbrace   \frac{D_{p,3} - 2}{3}:  p\in (\pmin,\pmax) \right\rbrace 
\]
must be a part of the spectrum $I$.

Let us illustrate the findings of this section by an example of a mono-fractal turbulent field. Later in \sect{s:example} we will upgrade it to multi-fractal construction.

We fix a scale $\ell$ and consider an arbitrary collection distribution of $N$ cubes $C_i$, $i=1,\dots, N$, of linear size $\ell$ placed at a distance of $\ell$ from each other. Define \[
\bu = \sum_{i=1}^N u_i \chi_{C_i},
\]
where all $|u_i| = U_0$. The cubes occupy a volume of $N \ell^3 = \ell^{3-D}$, so by definition $D=  - \log_\ell N \in[0,3]$. 

Let us compute the structure exponents now. We have
\begin{equation}\label{e:SpU}
    \lan | \d_\ell \bu |^p \ran = N U_0^p \ell^3 = U_0^p \ell^{3-D}.
\end{equation}
On the other hand, under the Kolmogorov $\frac45$th law, we must have
\[
\lan | \d_\ell \bu |^3 \ran \sim \e \ell.
\]
Consequently, $U_0 = \e^{1/3}\ell^{(D-2)/3}$. Plugging into \eqref{e:SpU} we obtain
\[
 \lan | \d_\ell \bu |^p \ran = \e^{p/3}\ell^{3-D + p(D-2)/3}.
\]
So, we arrive at the formula
\[
\zeta_p = 3-D + p(D-2)/3, \qquad p>0.
\]
This is exactly the spectrum of the mono-fractal model given in \eqref{e:zetabeta}. In fact, computing the dimensions we obtain $D_p = D$ for $p>0$ directly from \eqref{eq:dzetadp}, and all $D_{q,p} = D$, $p,q>0$.
So, as we can see, the definitions recover the natural dimension of the field. 

Computing the velocity thresholds from \eqref{e:spfirst} we obtain all $s_p = U_0$, illustrating the fact that $U_0$ stands out as the active velocity of the eddies.

When $D<3$, the graphs of $D_p$ and $\zeta_p$ experience a jump discontinuity at $p =0$, since as always $D_0 = 3$ and $\zeta_0=0$ by definition. This occurs due to idealization we took of the field itself picking only the active eddies, whereas in more realistic situation the field is more likely to be spread around at lower levels. This would result in a rapid smooth transition from level $3$ to $D$ as far as $D_p$ is concerned, and similarly for $\zeta_p$. Consequently, the H\"older spectrum in our idealized example should be viewed as the ray $[h_{\min}, h_{\max}) = [\frac{D-2}{3}, \infty)$.
The MFR dimensions satisfy $d_h = D$ for all $\frac{D-2}{3} \leq  h <\infty$. 

In Kolmogorov's regime $D=3$ the exponent $\zeta_p$ is continuous at the origin and $h_{\min} = h_{\max} = 1/3$. Also $\zeta_0' =1/3$ and consequently $d_{1/3}=D_0=3$. This case is illustrated on Figure~\ref{f:case5}.

For $D<3$ we have $\zeta'_0 = \infty$, so morally speaking we expect $d_\infty =3$ even though $d_\infty$ is not defined. This should be understood in the sense that $d_{h_{\max}}$ is close to $3$ for a smooth approximation of $\zeta_p$.

To conclude, the predictions of our theory manifest themselves as expected, albeit in a somewhat degenerate form.

\subsection{Energy flux and the third order exponent $\zeta_3$}\label{s:45th} The evidence for $\zeta_3=1$  which we postulated in the previous section can be seen in the context of deterministic weak solutions of the Euler equation
\begin{align} \label{ee}
\frac{\partial \bu}{\partial t} + (\bu \cdot \nabla)\bu &= - \nabla p +f, \\
\nabla \cdot \bu &=0. \label{diver-free}
\end{align}

In \cite{dr} Duchon and Robert showed that for every weak solution $u \in L^3(\O_T)$ to the Euler equation one can associate a distribution $\mathcal D_{\bu}$, representing the anomalous energy dissipation, such that
 \begin{equation}\label{DRE}
    \p_t \left( \frac{1}{2} |\bu|^2 \right) + \diver \left( \bu\left(\frac{1}{2} |\bu|^2 + p\right)\right) + \mathcal D_u = f\cdot \bu.
\end{equation}
In fact, $D_{\bu}$ is the limit in the sense of distributions of
\[
D^\ell_{\bu}(x,t) = \frac{1}{4} \int \nabla \Psi^\ell(y)\cdot \d_y \bu |\d_y \bu|^2 \, dy,
\]
for any non-negative compactly supported mollifier $\Psi^\ell(x) = \ell^{-3} \Psi(x/\ell)$, where $\delta_y \bu = \bu(x+y,t)-\bu(x,t)$. Choosing radially symmetric mollifier $\nabla \Psi^\ell(x)= \ell^{-4}\Phi(|x|/\ell)$ supported on $\alpha^{-1} \ell  \leq r \leq \alpha \ell$ we compute
\[
\begin{split}
\varepsilon_\ell := \lan |\mathcal D^\ell_u |\ran &\leq \frac{1}{4\ell^4} \int_{\alpha^{-1}\ell}^{\alpha \ell} |\Phi(r/\ell)| 4\pi r^2\lan  |\d_r \bu|^3 \ran \, dr\\
& \sim  \ell^{-1}\lan  |\d_r \bu|^3 \ran_{r \sim \ell},
\end{split}
\]
where heuristically $\varepsilon_\ell$ represents the energy flux through scale $\ell$. Thus
\[
\lan \zeta_3(r) \ran_{r \sim \ell} \geq 1 + \log_\ell (c \varepsilon_\ell) \to 1, 
\]
as $\ell \to 0$, provided $0< \lim_{\ell \to 0} \varepsilon_\ell < \infty$, as expected for turbulent flows.

On the other hand, a similar computation yields an upper bound for the longitudinal exponent, defined as
\[
\zeta^\parallel _3(\ell)= \log_\ell \lan (\d_\ell \bu \cdot \ell/|\ell| )^3\ran,
\]
assuming some regularity of $\bu$.

\subsection{A simple example of multifractality} \label{s:example}

By analogy with the example presented in \sect{s:beta} we can construct a field whose multifractal spectrum approximates an arbitrary convex dimension $d_h$, and hence, approximates an arbitrary convex power law $\zeta_p$. 

In order not to make it exceedingly complicated we allow the field to vanish as before, and therefore we will adhere to the case of $\pmin=0$. In other words, we assume that $d_h$ is non-decreasing since $p = d_h'$. Let us also assume that the end-points of the spectrum are finite, i.e.,  $\hmin,\hmax, \pmin=0,\pmax \in \R$. In other words, the slopes of the graphs of $\zeta_p$ and $d_h$ are finite at the ends, and $d_{\hmax}=3$.

Let us fix a sequence $\hmin=h_1<h_2<\dots <h_K = \hmax$. Also, fix dimensions $\mathcal{D}_k = d_{h_k}$, which forms a monotonely increasing sequence. We now define a field $\bu$, by considering $K$ families of cubes $C_{i,k}$, $k=1,\dots,K$,  $i=1,\dots,N_k$, or size $\ell$ separated by a distance of at least $\ell$ form each other. We assume $|u_{i,k}|= \ell^{h_k}$, and $N_k = \frac{1}{\ell^{D_k}}$. Define
\[
\bu = \sum_{k=1}^K \sum_{i=1}^{N_k} u_{i,k} \chi_{C_{i,k}}.
\]
Then
\[
\lan |\d_\ell \bu|^p \ran = \sum_{k=1}^K \ell^{3+ph_k - \mathcal{D}_k} \sim \ell^{\zeta^K_p},
\]
where 
\[
\zeta_p^K = \min_k (3+ph_k - \mathcal{D}_k).
\]
So the (smooth) power spectrum for the constructed vector field $\bu$ is close to a polygon $\zeta_p^K$ with nodal points given by
\[
p_k = \frac{\mathcal{D}_{k+1} - \mathcal{D}_k}{h_{k+1} - h_k}, \quad \zeta_{p_k}^K = 3 + \frac{\mathcal{D}_{k+1} h_k - \mathcal{D}_{k} h_{k+1}}{h_{k+1}-h_k}, \quad k =1, \dots K-1,
\]
and $p_K=0$, $\zeta_{p_K}=0$.
The graph of $\zeta_p$ is inscribed under the polygon, whose edges are tangent to $\zeta_p$ at points $p=d'_{h_k}$.

Now we can use formula \eqref{eq:dzetadp} to find the intermittency dimension $D_p$ corresponding to the polygon $\zeta_p^K$, which will approximate the intermittency dimension of the constructed field $\bu$.
The dimension function $D_p$ is not well-defined at the nodal points $p_k$'s, but otherwise is given by
\[
D_p = \sum_{k=1}^{K} \mathcal{D}_k \chi_{(p_{k+1},p_k)}(p).
\]

As we let $\ell \to 0$, these fields will provide arbitrarily close approximation to the given data demonstrating that at least formally there is no functional restrictions on the possible multifractal spectra.

\subsection{Intermittent spectrum of a random field} \label{s:random_field}

In this section we compute the expected spectrum of a vector field with randomized Fourier coefficients, and show that  in the limit of vanishing scale $\ell \to 0$ the statistical laws of such a field approach the classical K41 prediction. Our computation are explicit enough and it allows to read off intermittency corrections at any finite scale.

Let us assume that the fluid domain is the torus $\T^3$. Let us fix a base field
\[
 \bu_0 = \sum_{k\in \Z^3} u_k e^{i k\cdot x}.
\]
We will assume that $\bu_0$ is isotropic, i.e., the $L^p$ norms of the velocity displacement do not depend on the direction of the displacement, and hence the $p$-th order structure function can be expressed as
\begin{equation}\label{e:basefield}
\lan |\d_\ell \bu_0|^p \ran = \fint_{\T^3} |\bu_0(x+\vec \ell)-\bu_0(x) |^p\, dx,    
\end{equation}
for any vector $\vec \ell$ with $|\vec \ell|=\ell$. Such fields are expected and observed in turbulent flows, and  mathematical examples are abundant. For instance, one can consider any radial field $\bu_0 = \bu_0(|x|)$ supported on a coordinate chart of the torus, or a combination of such disjoint fields separated by a distance of at least $\ell$.

Based on the idea that in a fully turbulent flow Fourier coefficients of a field may point in random directions, we fix a set of independent identically distributed mean-zero Rademacher random variables $\{ \th_k \}_{k\in \Z_+}$, $\th_k=\pm 1$, where
\[
\Z^3_+ = \{ (n_1,n_2,n_3): n_1>0\} \cup  \{ (0,n_2,n_3): n_2>0\}  \cup  \{ (0,0,n_3): n_3>0\} .
\]
We extend it to $\Z^3_-  = \Z^3 \backslash (\Z^3_+ \cup \{0\})$, by $\th_{-k} = {\th}_k$.  Define the random field by
\[
 \bu = \sum_{k\in \Z^3} u_k \th_k e^{i k\cdot x},
\]
where $u_{-k} = \bar{u}_k$ and $u_0 = 0$ are fixed. Note that $\bu$ is no longer isotropic, however, for the 2nd order structure function we still have \eqref{e:basefield} because the $L^2$-norms are not random by Parseval's identity.
In other words, the angle averaging in $\lan |\d_\ell \bu|^2 \ran$ can be omitted. Naturally, as before, we will use $\ell$ for both the displacement vector and its magnitude.

Given an isotropic vector filed, we decompose it as
\[
\d_\ell \bu = \sum_{k\in \Z^3} u_k \th_k e^{i k\cdot x} (e^{i k \cdot \ell} - 1) = \d_\ell \bu^+ + \d_\ell \bu^-,
\]
where $\d_\ell \bu^\pm$ contains modes from $\Z^3_\pm$, respectively.

Let us compute expected value of the $p$-th order structure function. We will be primarily interested in the case where $p$ is large, so we assume that $p\geq 3$ (although the computation below can be modified for all $p>0$).
Note that 
\[
\lan | \d_\ell \bu^- |^p \ran =\lan | \overline{\d_\ell \bu^+} |^p \ran =  \lan | \d_\ell \bu^+ |^p \ran.
\]
Hence, by the H\"older inequality,
\[
\lan | \d_\ell \bu |^p \ran = \lan |\d_\ell \bu^+ + \d_\ell \bu^- |^p \ran \leq \lan (|\d_\ell \bu^+|   + | \d_\ell \bu^- |)^p \ran \leq 2^{p-1} \lan (|\d_\ell \bu^+|^p   + | \d_\ell \bu^- |^p) \ran = 2^p\lan | \d_\ell \bu^+ |^p \ran.
\]
Recall that each $\d_\ell \bu^\pm$ is randomized by independent variables. According to the classical Khintchine inequality,
\[
\begin{split}
\E \lan | \d_\ell \bu |^p \ran &\leq 2^p \lan \E | \d_\ell \bu^+ |^p \ran\\
&\leq  2^pB_p \left\langle \Big( \sum_{k\in \Z^3_+} |u_k|^2|e^{i k \cdot \ell} - 1|^2 \Big)^{p/2} \right\rangle_{\mathrm{angle}}\\
&= 2^pB_p  \left\langle \Big( \frac12\sum_{k\in \Z^3} |u_k|^2|e^{i k \cdot \ell} - 1|^2 \Big)^{p/2} \right\rangle_{\mathrm{angle}}\\
&= 2^{p/2}B_p \lan \lan | \d_\ell \bu |^2 \ran^{p/2} \ran_{\mathrm{angle}} =  2^{p/2}C_p \lan \lan | \d_\ell \bu_0 |^2 \ran^{p/2} \ran_{\mathrm{angle}} \\
&= 2^{p/2}B_p  \lan | \d_\ell \bu_0 |^2 \ran^{p/2} =  2^{p/2}C_p  \lan | \d_\ell \bu |^2 \ran^{p/2},
\end{split}
\]
where
\[
B_p = 2^{p/2}\frac{1}{\sqrt{\pi}} \G( (p+1) / 2 ).
\]
So,
\[
\E \lan | \d_\ell \bu |^p \ran   \leq C_p \lan | \d_\ell \bu |^2 \ran^{p/2}, \qquad C_p = 2^{p/2} B_p.
\]
Since $\zeta_2$ is not random, we have
\[
\lan | \d_\ell \bu |^2 \ran^{p/2} = \ell^{\frac{p}{2} \zeta_2 } = \ell^{\frac{p}{2} \E  \zeta_2 }.
\]
At the same time, by the Jensen inequality, since $\ell^x$ is convex, we have
\[
\E \lan | \d_\ell \bu |^p \ran = \E \ell^{\zeta_p} \geq \ell^{\E \zeta_p}.
\]
Thus, we obtain the following bounds on the structure functions:
\[
 \ell^{\frac{p}{2} \E \zeta_2 } \leq \ell^{\E \zeta_p} \leq C_p \ell^{\frac{p}{2} \E \zeta_2 }.
\]
Here, the left hand side follows by concavity. Potentiating,
\begin{equation} \label{eq:zeta_p_lower_bound}
\frac{p}{2} \E \zeta_2 \geq \E \zeta_p \geq \frac{p}{2} \E \zeta_2 + \log_\ell C_p.
\end{equation} 

So, in the limit as $\ell \to 0$ we obtain
\[
\E \zeta_p = \frac{p}{2} \E \zeta_2.
\]
If the original field is normalized in accordance with the Kolmogorov's $\frac45$th law, then $\E \zeta_3 = 1$, and we obtain the classical K41 spectrum
\[
\lim_{\ell \to 0} \E \zeta_p = \frac{p}{3}. 
\]

Going back to \eqref{eq:zeta_p_lower_bound} we can read off intermittency corrections at each scale quite explicitly:
\begin{equation} \label{e:zetaplower}
\frac{p}{2} \E \zeta_2 \geq \E \zeta_p \geq \frac{p}{2} \E \zeta_2 + \frac{-\ln \sqrt{\pi} + p\ln 2 + \ln \G( (p+1) / 2 )}{\ln \ell}.
\end{equation}
Again, under the Kolmogorov normalization $\E \zeta_3 = 1$, computing \eqref{e:zetaplower} at $p=3$ we obtain the estimates
\[
\frac23 - \frac{4\ln 2}{3 \ln \ell} \geq \E \zeta_2 \geq \frac23.
\]
Plugging back into \eqref{e:zetaplower} we obtain
\begin{equation} \label{e:zetaplower2}
p \left(\frac{1}{3} - \frac{2\ln 2}{3 \ln \ell} \right) \geq \E \zeta_p \geq \frac{p}{3}+ \frac{-\ln \sqrt{\pi} + p\ln 2 + \ln \G( (p+1) / 2 )}{\ln \ell}.
\end{equation}

Figure~\ref{f:graph} shows the graphs of the upper and lower bounds on $\E \zeta_p$ in \eqref{e:zetaplower2}, as well as the intermittency dimension $D_p$ for the lower bound in \eqref{e:zetaplower2} computed according to \eqref{eq:dzetadp}
\begin{equation} \label{eq:entreme_D_p}
D_p = 3 - \frac{-\ln \sqrt{\pi} +\ln \G( (p+1) / 2 ) - \frac{p}{2}\psi((p+1) / 2)}{\ln \ell},
\end{equation}
where $\psi(z) = \G'( z )/\G( z )$ is the polygamma function. The smallest H\"older exponent in this case is $h_{\min} = - \infty$. We  also see that the intermittency correction term 
\begin{equation} \label{eq:extreme_I_p}
I_p(\ell) =  \frac{-\ln \sqrt{\pi} + p\ln 2 + \ln \G( (p+1) / 2 )}{\ln \ell} 
\end{equation}
becomes more prominent as $p$ increases with $\ell$ being fixed, and for any fixed $p$ it vanishes as $\ell \to 0$ pushing the exponent to the Kolmogorov regime $\E \zeta_p \to \frac{p}{3}$.


\begin{figure}
\centering
	\includegraphics[width=4.5in]{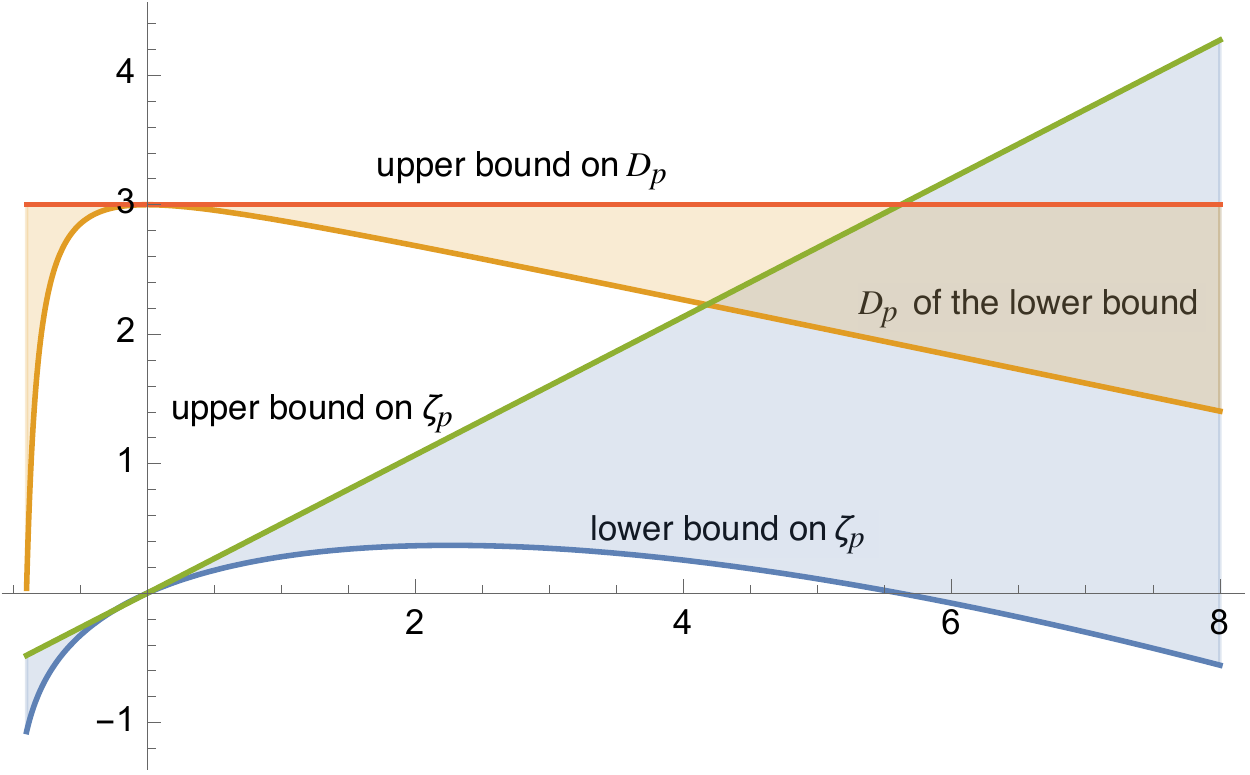}
	\caption{Graphs of the upper and lower bounds on $\zeta_p$ \eqref{e:zetaplower2}, upper bound on $D_p$, $D_p \leq 3$, and $D_p$ computed for the lower bound on $\zeta_p$ \eqref{eq:entreme_D_p} with $\ell=0.1$.}
	\label{f:graph}
\end{figure}

\section{Volume factors, active velocities and regions} \label{s:active}

This section is devoted to analytical scrutiny of the newly introduced concepts.  The main purpose here will be to demonstrate that the volume factors, dimensions and the associated thresholds define physically meaningful notions that   extract the right information from the field $\bu$ relevant in multifractal turbulence. 

The actual origins of the field will not play a role here. We will simply fix a function $f$ which belongs to a range of $L^p$-spaces defined over a probability measure space $(\Omega,\mu)$. The averages $\lan \cdot \ran$ mean the usual integration over $\O$:
\begin{equation}\label{}
\lan f \ran = \int_{\O} f(\o) \dmu(\o).
\end{equation}

\subsection{Volume factors and dimensions} \label{sec:active-volumes-intro} 
We start with formalities of the active volume/region theory.  Let $-\infty \leq p,q \leq \infty$, $q\neq p$. We define  the \emph{$(q,p)$-volume factor} of $f$ as follows
\begin{equation}\label{e:vol-intro}
V_{q,p} = \frac{ \lan |f|^q \ran^{\frac{p}{p-q}}}{ \lan |f|^p \ran^{\frac{q}{p-q}}}.
\end{equation}
Here are a few properties of volume factors that are easy to verify: 
\begin{itemize}
    \item[(V1)] $V_{q,p}$ is adimensional;
    \item[(V2)] symmetric:  $V_{q,p} = V_{p,q}$;
    \item[(V3)] homogeneous: $V_{q,p}(f) = V_{p,q}( \l f)$;
    \item[(V4)] $V_{q,p} \leq 1$ if $pq \geq 0$, and $V_{q,p} \geq 1$ if $pq \leq 0$. Also, $V_{q,0}=V_{0,p} = 1$.
    \item[(V5)] log-convex:  for any triple $p_1<p_2<p_3$, and any $q$, 
    \begin{equation}\label{e:logconv}
\begin{split}
 V_{q,p_2} &\leq (V_{q,p_1})^{\frac{q-p_1}{q-p_2} \frac{p_3-p_2}{p_3-p_1}}(V_{q,p_3})^{\frac{q-p_3}{q-p_2} \frac{p_2-p_1}{p_3-p_1}}, \quad q>p_2 \\
  V_{q,p_2} &\geq (V_{q,p_1})^{\frac{q-p_1}{q-p_2} \frac{p_3-p_2}{p_3-p_1}}(V_{q,p_3})^{\frac{q-p_3}{q-p_2} \frac{p_2-p_1}{p_3-p_1}}, \quad q<p_2.
\end{split}        
    \end{equation}
\end{itemize}

Let us simply notice that (V5) is a consequence of the interpolation inequality,
\begin{equation}
 \lan |f|^{p_2} \ran   \leq    \lan |f|^{p_1} \ran^\frac{p_3-p_2}{p_3-p_1}     \lan |f|^{p_3} \ran^\frac{p_2-p_1}{p_3-p_1}  .
\end{equation}

Slightly less trivial is the following property.

\begin{lemma}\label{l:V6}
The volume-factors obey the following monotonicity properties:
\begin{equation}\label{e:V6}
\p_q V_{q,p}\leq 0  \text{ if } p>0, \text{ and  } \p_q V_{q,p}\geq 0 \text{ if } p\leq 0. \tag{V6}
\end{equation}
\end{lemma}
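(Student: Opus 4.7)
The plan is to pass to logarithms and reduce (V6) to the log-convexity of $r\mapsto\ln\langle|f|^r\rangle$, which is the same interpolation fact already used to establish (V5).

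Setting $A(r):=\ln\langle|f|^r\rangle$, one has
\[
\ln V_{q,p} \;=\; \frac{p\,A(q)-q\,A(p)}{p-q},
\]
so a quotient-rule computation in $q$ gives, after standard simplification,
\[
\partial_q \ln V_{q,p} \;=\; \frac{p}{(p-q)^2}\Bigl[(p-q)\,A'(q)\,-\,\bigl(A(p)-A(q)\bigr)\Bigr].
\]
The prefactor $p/(p-q)^2$ carries the sign of $p$, so the entire question reduces to showing that the bracketed quantity is nonpositive, irrespective of the ordering of $p$ and $q$. Since $V_{q,p}>0$, the sign of $\partial_q\ln V_{q,p}$ will coincide with that of $\partial_q V_{q,p}$.

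For the sign of the bracket I would rewrite it as the signed integral
\[
(p-q)\,A'(q) - \bigl(A(p)-A(q)\bigr) \;=\; \int_q^p\bigl(A'(q)-A'(s)\bigr)\,ds,
\]
and invoke convexity of $A$: the interpolation inequality behind (V5), namely H\"older with exponents $1/\theta$ and $1/(1-\theta)$ applied to $\langle|f|^{\theta r_1}|f|^{(1-\theta)r_3}\rangle$, yields $A(\theta r_1+(1-\theta)r_3)\leq\theta A(r_1)+(1-\theta)A(r_3)$, so $A'$ is nondecreasing. A one-line case split then handles both orderings: if $p>q$ the integrand is $\leq 0$ on $[q,p]$; if $p<q$ the integrand is $\geq 0$ on $[p,q]$ while $\int_q^p=-\int_p^q$ reverses sign. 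Either way the bracket is $\leq 0$. Combining with the sign of the prefactor gives $\partial_q\ln V_{q,p}\leq 0$ for $p>0$ and $\geq 0$ for $p<0$, which is (V6). The boundary case $p=0$ is trivial since $V_{q,0}\equiv 1$.

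There is no genuine obstacle here; the proof is a two-step computation once the log-convexity of $A(r)$ is singled out as the structural input. The only mild care required is the uniform treatment of the two cases $p>q$ and $p<q$, which is absorbed cleanly by the signed-integral representation above.
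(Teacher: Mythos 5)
Your proof is correct, and it takes a genuinely different route from the paper's. The paper argues without differentiating: for $q>p$ it rewrites $V_{q,p}$ as a power of an $L^{q-p}$ norm taken with respect to the renormalized measure $\frac{|f|^p}{\lan |f|^p\ran}\dmu$ and invokes the monotonicity of $L^r$ norms in $r$ on a probability space; for $q<p$ it needs an additional $\e$-truncation $\chi_{|f|>\e}$ to write $V_{q,p}$ in the same normed form before passing to the limit. Your argument instead funnels everything through the single structural fact that $A(r)=\ln\lan|f|^r\ran$ is convex (the same H\"older interpolation underlying (V5)), and the signed-integral identity
\[
(p-q)A'(q)-\bigl(A(p)-A(q)\bigr)=\int_q^p\bigl(A'(q)-A'(s)\bigr)\,ds
\]
handles both orderings of $p$ and $q$ uniformly, with no truncation needed. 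What the paper's route buys is that it never differentiates $A$ at all — it proves genuine monotonicity of $q\mapsto V_{q,p}$, not just a sign of the derivative — so it needs no smoothness of $r\mapsto\lan|f|^r\ran$ beyond finiteness; your route implicitly assumes $A'(q)$ exists and that $A$ is locally absolutely continuous, which is harmless here (convex finite functions are differentiable a.e.\ and absolutely continuous on compact subintervals of the interior of their domain, and the lemma's statement in terms of $\p_q V_{q,p}$ already presupposes differentiability). What your route buys is a cleaner, unified two-case-free computation and a transparent identification of log-convexity of the moments as the only input. Both are complete proofs.
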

\begin{proof}
If $q>p$, then we use the following form of $V_{q,p}$:
\[
V_{q,p} =  \lan |f|^p \ran  \left(  \frac{1}{ \left\langle \frac{|f|^p}{\lan |f|^p \ran  } |f|^{q-p} \right\rangle^{\frac{1}{q-p} }  }\right)^p
\]
Clearly the expression on the bottom represents a $L^{q-p}$ norm with respect to the normalized measure $\frac{|f|^p}{\lan |f|^p \ran  }\dmu$. Hence, the expression is increasing if $p>0$ and decreasing if $p\leq 0$.  For $q<p$ we first observe that 
\[
V^\e_{q,p} = \frac{ \lan |f|^q \chi_{|f| >\e} \ran^{\frac{p}{p-q}}}{ \lan |f|^p \ran^{\frac{q}{p-q}}} \to V_{q,p}.
\]
But
\[
V^\e_{q,p} =  \lan |f|^p \ran  \left(  \left\langle \frac{|f|^p}{\lan |f|^p \ran  } \left( \frac{\chi_{|f| >\e}}{|f|}\right) ^{p-q}  \right\rangle^{\frac{1}{p-q}  }\right)^p.
\]
By the same token $V^\e_{q,p}$ is decreasing in $q$ if $p>0$ and increasing if $p<0$, and consequently so is $V_{q,p} $. This proves \eqref{e:V6}. 
\end{proof} 

We note that by the symmetry we have similar monotonicity in $p$. This makes $(0,0)$ a saddle point for the volume factors.

For a fixed scale $\ell <1$, we also define the corresponding two-parameter family of dimensions
\begin{equation} \label{e:Dpq3}
    D_{q,p} = 3 - \log_\ell V_{q,p}.
\end{equation}
The log-convexity of expressed in \eqref{e:logconv} translates into the conventional convexity for the dimensions:
\begin{equation}
\begin{split}
 D_{q,p_2} &\leq \l_1 D_{q,p_1} + \l_2 D_{q,p_3}, \quad q>p_2 \\
  D_{q,p_2} &\geq \l_1 D_{q,p_1} + \l_2 D_{q,p_3}, \quad q<p_2 \\
 \l_1 = \frac{q-p_1}{q-p_2} \frac{p_3-p_2}{p_3-p_1} , &\quad \l_2 = \frac{q-p_3}{q-p_2} \frac{p_2-p_1}{p_3-p_1}.
\end{split}
\end{equation}
We recover the same monotonicity for $D_{q,p}$ as for volumes (V6), and from (V4) we obtain
\begin{equation}\label{e:D3}
D_{0,p}=D_{q,0} = 3, \quad D_{q,p}\leq 3 \text{ if } pq\geq 0,  D_{q,p}\geq 3 \text{ if } pq\leq 0.
\end{equation}

\subsection{Concentration of information}
The basic analytical meaning of a volume factor is to give a measure of a set containing much of the information carried by the source $f$. More precisely, we have the following lemma.

\begin{lemma}\label{l:AV} Let $0\leq p < q \leq \infty$. 	There exists a set $A \ss \O$ with $\mu(A) = V_{q,p}$ such that 
	\begin{equation}\label{}
	(1-c_{q,p}) \int_{\O} |f|^q \dmu \leq \int_{A} |f|^q \dmu, 
	\end{equation}
	where $c_{q,p} = \left( \frac{q-p}{q} \right)^{\frac{q-p}{q}}\left( \frac{p}{q} \right)^{\frac{p}{q}}$.
\end{lemma}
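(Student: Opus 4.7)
The plan is to take $A$ as essentially a super-level set of $|f|$. Set
\[
s_V := \inf\bigl\{s > 0 : \mu(\{|f| > s\}) \leq V_{q,p}\bigr\}
\]
and choose $A$ with $\{|f|>s_V\} \ss A \ss \{|f|\geq s_V\}$ and $\mu(A) = V_{q,p}$; this exact matching is possible provided the measure is non-atomic on $\{|f|=s_V\}$ (otherwise a short approximation suffices), and in particular it is automatic for Lebesgue-type source fields considered elsewhere in the paper. The argument then reduces to a two-sided Chebyshev-type estimate.

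First I would record the algebraic identities $s_{q,p}^{p} V_{q,p} = \lan |f|^p \ran$ and $s_{q,p}^{q} V_{q,p} = \lan |f|^q \ran$, both immediate from the definitions of $s_{q,p}$ and $V_{q,p}$. Chebyshev's inequality applied to $|f|^p$ gives $V_{q,p} \leq \mu(\{|f|\geq s_V\}) \leq \lan |f|^p\ran /s_V^p$, and combined with the first identity this forces $s_V \leq s_{q,p}$. I parametrize $s_V = \theta\, s_{q,p}$ with $\theta \in (0,1]$.

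Since $|f| \geq s_V$ on $A$, we have $\int_A |f|^p \geq s_V^p V_{q,p}$, so $\int_{A^c}|f|^p \leq \lan|f|^p\ran - s_V^p V_{q,p}$. Since $|f|\leq s_V$ on $A^c$, multiplying through by $s_V^{q-p}$ yields
\[
\int_{A^c} |f|^q \ \leq\ s_V^{q-p}\!\int_{A^c}|f|^p \ \leq\ s_V^{q-p}\bigl(\lan|f|^p\ran - s_V^p V_{q,p}\bigr).
\]
Substituting $s_V=\theta\, s_{q,p}$ and invoking both identities (note $s_{q,p}^{q-p}\lan|f|^p\ran = \lan|f|^q\ran$), the right-hand side collapses to $\lan|f|^q\ran \cdot \theta^{q-p}(1-\theta^p)$.

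The remaining task is the elementary bound $\theta^{q-p}(1-\theta^p) \leq c_{q,p}$ on $[0,1]$. With $x = \theta^p$ the left side becomes $x^{(q-p)/p}(1-x)$, and a routine derivative test shows the maximum is attained at $x=(q-p)/q$ with value $\bigl((q-p)/q\bigr)^{(q-p)/p}\cdot (p/q)$. Writing $t=p/q$ and taking logarithms, one finds
\[
\log\!\Bigl(\bigl((q-p)/q\bigr)^{(q-p)/p}(p/q)\Bigr) - \log c_{q,p} \ =\ -\,\frac{1-t}{t}\,H(t),
\]
where $H(t) = -t\log t - (1-t)\log(1-t) \geq 0$ is the binary entropy, so the difference is nonpositive and the required inequality follows. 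The only real subtlety I anticipate is Step~1, namely realizing the prescribed measure $V_{q,p}$ exactly; the analytical heart of the proof is entirely self-contained in the Chebyshev step and the one-variable calculus estimate.
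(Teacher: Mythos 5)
Your proof is correct and takes essentially the same route as the paper's: choose $A$ to be a superlevel set of $|f|$ of measure exactly $V_{q,p}$, use Chebyshev's inequality for $|f|^p$ to control the threshold, bound $\int_{\O\bs A}|f|^q \leq s_V^{q-p}\int_{\O\bs A}|f|^p$, and maximize a one-variable function. Your parametrization by $\theta = s_V/s_{q,p}$ in fact produces the sharper constant $\frac{p}{q}\left(\frac{q-p}{q}\right)^{(q-p)/p}$, and your closing binary-entropy computation showing that this is at most $c_{q,p}$ supplies a verification that the paper's proof only asserts.
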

\begin{proof} If $V_{q,p}=1$, the statement is trivial. Suppose $V_{q,p}<1$. Note that the function $\mu( \{ |f| \geq \a\})$ is continuous from the left, and at a point of a jump the size of the jump is exactly $\mu( \{ |f| = \a\})$.  Hence, there exists an $\a \geq 0$  and a set $B \ss \{ |f| = \a\}$ such that $A = \{ |f| >\a \} \cup B$ has measure exactly $V_{q,p}$.  By Chebyshev's inequality,
	\[
	V_{q,p} \leq \frac{1}{\a^p}\int_A |f|^p d\mu.
	\]
	Using this and the fact that $|f| \leq \a$ on $\O \bs A$ we obtain
	\[
	\lan |f|^q \chi_{\O \bs A} \ran \leq \a^{q-p} \lan |f|^p \chi_{\O \bs A} \ran \leq \frac{1}{V_{q,p}^{\frac{q-p}{p}}} \lan |f|^p \chi_{A} \ran^{\frac{q-p}{p}} \lan |f|^p \chi_{\O \bs A} \ran \leq \lan |f|^q \ran  \frac{ \lan |f|^p \chi_{\O \bs A} \ran^{\frac{p}{q}} \lan |f|^p \chi_{A} \ran^{\frac{q-p}{p} }}{ \lan |f|^p \ran }
	\]
	Note that the latter fraction is of the form $ \th^{\frac{p}{q}} (1-\th)^{\frac{q-p}{p}}$, $\th \in [0,1]$, which attains its maximum exactly at  the value $c_{q,p}$. This proves the lemma.
\end{proof}

Now there are several natural questions to ask. 

First, for a fixed $q$, what is the most optimal exponent $p$ that recovers most of the function? In other words, what is the smallest constant $c_{q,p}$? Elementary optimization yields, $c_{min} = \frac{1}{2}$, which is achieved when $p = \frac{q}{2}$.  So, it implies that the volume factors will recover a half of the $q$-weight of the function at best, provided we choose $p = q/2$: there exists a set $A \ss \O$ with $\mu = V_{q,\frac{q}{2}}$ such that 
\begin{equation}\label{}
	\frac12 \int_{\O} |f|^q \dmu \leq \int_{A} |f|^q \dmu.
	\end{equation}
It is worth noting that the classical flatness factor 
\[
\cF = V^{-1}_{4,2} = \frac{ \lan |f|^4 \ran }{ \lan |f|^2 \ran^2}
\]
emerges in this context as a particular case of an optimal concentration factor.

Second, we can see that on the other end letting $p\ra q$ gives $c_{q,p} \ra 1$. As a result, in this limit the information about concentration of the function $f$ gets lost. However, the corresponding volume factors do not degenerate, and converge to something non-trivial, namely,
\begin{equation}\label{}
V_p = \lim_{q \ra p} V_{q,p} = \lan |f|^p \ran \exp\left\{ - \frac{\lan |f|^p \ln |f|^p \ran}{\lan |f|^p \ran}\right\} = \exp\left\{ - \frac{\lan |f|^p \ln \frac{|f|^p}{ \lan |f|^p \ran} \ran}{\lan |f|^p \ran}\right\}.
\end{equation}
Let us call them {\em $p$-volume factors}. The corresponding dimensional values defined by
\begin{equation}\label{e:Dpf}
D_p := 3 - \log_\ell V_{p} = 3 - \log_\ell \lan |f|^p \ran  +  \frac{\lan |f|^p \log_\ell |f|^p \ran}{\lan |f|^p \ran}.
\end{equation}
have played the central role in the multifractal formalism we described in the previous section.

So, there arises a natural question:  what kind of concentration not captured by \lem{l:AV} do these factors represent? The short answer -- it is the concentration of the entropy of the renormalized distribution $F =   \frac{|f|^p}{ \lan |f|^p \ran}$.

To start, let us define the entropy $H =  \lan F \ln F \ran $, and note the following simple formula
\begin{equation}\label{e:VF}
V_p = e^{-H}.
\end{equation}

\begin{remark} As a side remark we note that $V_p$  is directly related to the classical  Shannon information: $\cN(F) = \frac{1}{2\pi e} \exp\left\{ - \frac{2}{3} H\right\}$ via the power law:
\[
\frac{1}{2\pi e} V_p^{\frac{2}{3}} (f) = \cN(F) ,
\]
see \cite{Villani} for more details.
\end{remark}

The entropy $H$ itself measures how close function $f$ is to a constant via the classical Csisz\'ar-Kullback inequality, see \cite{Villani}:
\[
\frac12 \lan |F - 1| \ran ^2 \leq \lan F \ln F \ran \leq \lan |F- 1|^2 \ran.
\]
It terms of $f$ itself we obtain
\begin{equation}
\frac12 \left\langle  \left|  \frac{|f|^p}{ \lan |f|^p \ran} - 1 \right| \right\rangle ^2 \leq  - \ln V_p \leq \left\langle  \left|  \frac{|f|^p}{ \lan |f|^p \ran} - 1 \right|^2 \right\rangle.
\end{equation}
So, the closer $V_p$ is to $1$, the less concentrated the function $f$ is, i.e. the more uniform it becomes.  

Closer to the smaller range, however, the $p$-factors regulate concentration of the entropic density $F\ln F$. We will see this expressed in the following two lemmas that establish concentration in two different ways. First, concentration in a weak sense states that we have to increase the allowed volume slightly to $V^{1-\e}$ in order to achieve concentration up to the proportion $\e$. Although weak, this result works under no  restrictions on $H$. And second, strong concentration states that $F\ln F$ indeed concentrates on a volume  $V_p$ but under an upper cap on the size of the entropies, $H  \leq H_0$. 

Both results hold for a general probability density $F$, its entropy $H = \lan F \ln F \ran$, and volumetric factor $V = e^{-H}$.

\begin{lemma}[Weak concentration]\label{l:weakconc} For any probability distribution $F$, and any $0<\e<1$ there exists a set $A \ss \O$ with $\mu(A) = V^{1-\e}$ such that 
    \begin{equation}\label{e:Vc}
    \e H \leq \int_{A} F \ln F \dmu.
    \end{equation}
\end{lemma}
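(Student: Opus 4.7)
The plan is to mirror the level-set construction of Lemma \ref{l:AV} applied directly to $F$, and then exploit the probability normalization $\langle F\rangle=1$ through Markov's inequality to convert the volume constraint $\mu(A)=V^{1-\varepsilon}$ into the desired entropy bound. The key observation is that Markov ties the height of a super-level set of $F$ to its measure, and this height, through monotonicity of $\ln$, controls $\int F\ln F$ on the complement.

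Following the left-continuity/jump-adjustment argument already used in the proof of Lemma \ref{l:AV}, I would first select $\alpha\geq 0$ and a subset $B\subseteq\{F=\alpha\}$ so that $A=\{F>\alpha\}\cup B$ has measure exactly $V^{1-\varepsilon}=e^{-(1-\varepsilon)H}$. Then $F\leq\alpha$ pointwise on $\Omega\setminus A$, while on $A$ the Markov inequality gives
\[
\alpha\,\mu(A)\leq \int_A F\,d\mu\leq \langle F\rangle=1,
\]
so that $\alpha\leq 1/\mu(A)=V^{-(1-\varepsilon)}$ and hence
\[
\ln\alpha\leq -(1-\varepsilon)\ln V=(1-\varepsilon)H.
\]

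Next I would estimate the entropy on the complement: since $F\leq\alpha$ and $F\geq 0$ there, the trivial pointwise bound $F\ln F\leq F\ln\alpha$ integrates to
\[
\int_{\Omega\setminus A} F\ln F\,d\mu\leq \ln\alpha\int_{\Omega\setminus A} F\,d\mu.
\]
When $\ln\alpha\leq 0$ the right-hand side is $\leq 0\leq(1-\varepsilon)H$ (recall $H\geq 0$ by Jensen's inequality applied to $x\ln x$); when $\ln\alpha>0$, using $\int_{\Omega\setminus A}F\leq 1$ and the Markov bound above, the right-hand side is $\leq \ln\alpha\leq(1-\varepsilon)H$. Splitting $H=\int_A F\ln F\,d\mu+\int_{\Omega\setminus A}F\ln F\,d\mu$ and rearranging yields $\int_A F\ln F\,d\mu\geq\varepsilon H$, which is \eqref{e:Vc}.

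I do not expect a genuine obstacle here: the whole proof reduces to one application of Markov's inequality combined with monotonicity of $\ln$. The only mild technicalities are the exact-measure selection of $A$ (handled verbatim as in Lemma \ref{l:AV}) and the sign case-analysis for $\ln\alpha$ in the final estimate; the degenerate case $\alpha=0$ is automatic since then $F=0$ on $\Omega\setminus A$.
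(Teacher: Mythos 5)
Your proof is correct and follows essentially the same route as the paper: the same level-set selection of $A$ with $\mu(A)=V^{1-\e}$ inherited from Lemma~\ref{l:AV}, the same Chebyshev/Markov bound on the threshold $\a$, and the same pointwise estimate $F\ln F\leq F\ln\a$ on $\O\setminus A$ leading to $\int_{\O\setminus A}F\ln F\,d\mu\leq(1-\e)H$. The only cosmetic difference is that the paper retains $\lan F\chi_A\ran$ and bounds the resulting function $g(x)=x\ln(1-x)+(1-\e)Hx$ by $(1-\e)H$, whereas you discard that factor immediately via $\int_A F\,d\mu\leq 1$; both reductions use $H\geq 0$ and give the identical final bound.
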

\begin{proof}
   We start as in the proof of \lem{l:AV} and find $A \ss \{F \geq \a\}$ such that $\mu(A) = V^{1-\e}$.  Then
  \[
  \lan F \ln F \chi_{\O \bs A} \ran \leq \lan F \chi_{\O \bs A} \ran \ln \frac{\lan F \chi_A \ran}{V^{1-\e}}    = \lan F \chi_{\O \bs A} \ran ( \ln \lan F \chi_A \ran + (1-\e)\lan F \ln F \ran ).
  \]
 We have $H =  \lan F \ln F \ran$, and hence the above is bounded by the maximum of the function
 \[
 g(x) = x \ln (1-x) + (1-\e)Hx, \quad x\in [0,1],
 \]
 which is bounded by $(1-\e) H$. 
\end{proof}
In terms of $f$, the result reads as follows
 \begin{equation}\label{e:Vc}
	\e \int_{\O} |f|^p \ln \frac{|f|^p}{ \lan |f|^p \ran} \dmu \leq \int_{A} |f|^p \ln \frac{|f|^p}{ \lan |f|^p \ran} \dmu.
\end{equation}

The same proof in fact implies the strong version and allows us to reach the original volume $\mu(A) = V$ provided we assume that the entropy has a priori known bound $H\leq H_0$.  Then, we obtain  concentration with the constant $c_{H_0}$ given by
\[
c_{H_0} = \sup_{x\in[0,1], H\leq H_0} \frac{1}{H} x \ln (1-x) + x.
\]
\begin{lemma}[Strong concentration]\label{l:strongconc} For any $H_0>0$ and probability distribution $F$, with $H\leq H_0$  there exists a set $A \ss \O$ with $\mu(A) = V$ such that 
	\begin{equation}\label{e:Vc}
		(1-c_{H_0}) H \leq \int_{A} F \ln F  \dmu.
	\end{equation}
\end{lemma}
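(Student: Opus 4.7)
The plan is to adapt the argument from \lem{l:weakconc} with one essential change: rather than inflating the target volume from $V$ to $V^{1-\e}$, we exploit the a priori bound $H\le H_0$ to directly absorb the error term.  First, following the construction used in the proofs of \lem{l:AV} and \lem{l:weakconc}, the left-continuity of the tail function $t\mapsto \mu(\{F\ge t\})$ together with its jump identity $\mu(\{F=t\})$ at discontinuities produces a value $\a\ge 0$ and a set $B\ss \{F=\a\}$ such that $A:=\{F>\a\}\cup B$ has $\mu(A)=V$ exactly.  If $\a=0$ then $F$ must vanish on $\O\bs A$, in which case the claim is immediate with the convention $0\ln 0=0$; so we may assume $\a>0$.

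Second, because $F\ge \a$ pointwise on $A$, one has $\lan F\chi_A\ran \ge \a V$, i.e.\ $\a \le \lan F\chi_A\ran / V$.  Combined with $F\le \a$ on $\O\bs A$ and the identity $V=e^{-H}$, the elementary inequality $F\ln F \le F\ln \a$ (valid since $F\ge 0$ and $\ln(F/\a)\le 0$ on $\O\bs A$) yields
\begin{equation*}
\lan F \ln F\,\chi_{\O\bs A}\ran \;\le\; \lan F\chi_{\O\bs A}\ran \ln \a \;\le\; \lan F\chi_{\O\bs A}\ran \bigl( \ln \lan F\chi_A\ran + H\bigr).
\end{equation*}
Setting $x := \lan F\chi_{\O\bs A}\ran \in [0,1]$ and using $\lan F\chi_A\ran = 1-x$, the right-hand side equals $x\ln(1-x)+xH$.

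Third, this is precisely the quantity controlled by the constant $c_{H_0}$.  Since $H\le H_0$ and $x\ln(1-x)\le 0$, the definition of $c_{H_0}$ gives $\tfrac{1}{H}x\ln(1-x) + x \le c_{H_0}$, hence
\begin{equation*}
\lan F\ln F\,\chi_{\O\bs A}\ran \le c_{H_0}\,H.
\end{equation*}
Subtracting this from the total $H=\lan F\ln F\ran$ delivers $\lan F\ln F\,\chi_A\ran \ge (1-c_{H_0})H$, which is the claim.  There is no substantive obstacle here: the only subtlety worth recording is that, for each fixed $x\in(0,1)$, the supremum defining $c_{H_0}$ is effectively attained at $H=H_0$ (because $x\ln(1-x)/H$ is negative and increases toward $0$ as $H$ grows), which is exactly where the a priori cap $H\le H_0$ enters the argument — in contrast to \lem{l:weakconc}, where the absence of such a cap forced one to pay for concentration with a logarithmic enlargement of the volume from $V$ to $V^{1-\e}$.
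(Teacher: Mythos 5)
Your proof is correct and is essentially the paper's own argument: the paper gives no separate proof for the strong version, remarking only that ``the same proof'' as in Lemma~\ref{l:weakconc} applies with $\mu(A)=V$ in place of $V^{1-\e}$, the extra term $Hx$ now being absorbed by the definition of $c_{H_0}$ thanks to the cap $H\le H_0$ --- which is exactly what you carry out. Your explicit treatment of the $\a=0$ edge case and the remark on where the sup in $c_{H_0}$ is effectively attained are harmless elaborations of the same route.
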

Unfortunately in this case we lose information  $c_{H_0} \to 1$, as $H_0 \to \infty$. An example showing that this worsening is indeed happening can be constructed as follows.  Let $\{I_i\}_{i=1}^\infty$ be a family of disjoint intervals on $[0,1]$ with $|I_i| = \frac{1}{2^i}$.  Define $F = \sum_{i=1}^n \frac{2^i}{n} \chi_{I_i}$.  Then $\lan F \ran =1$ and 
\[
H = \frac{1}{n} \sum_{i=1}^n \ln\frac{2^i}{n} \sim n \ln2 - \ln(n),
\]
and $V \sim \frac{n}{2^n}$. In order to capture a set of this volume we need to take last $n-k$ intervals so that 
\[
|\{F \geq \a \}| = \sum_{i=k}^n \frac{1}{2^i} \sim \frac{1}{2^k} =  \frac{n}{2^n}.
\]
This implies $k = n - \log_2(n)$.  But  then
\[
\int_A F \ln F \dx = \frac{1}{n} \sum_{i=k}^n \ln\frac{2^i}{n} = \ln \frac{2^{\frac{k+...+n}{n}}}{n^{\frac{n-k}{n}}} \lesssim \ln n,
\]
which is an order smaller than $H$.

In summary, we have demonstrated that the volume factors represent a measure of concentration of the function $f$ or its renormalized entropy $F\ln F$ in the case of 1-parameter factors.  This motivates us to look further into the question of what hight levels of the function $f$, which already appeared in the proofs of this section,  determine the threshold for its ``most active" values. 

\subsection{Active thresholds and regions}

Lemmas~\ref{l:AV}, \ref{l:weakconc}, \ref{l:strongconc} provide little constructive information about the threshold $\a$ that define concentration sets $A$. In this section we will give a more physically relevant concept of an appropriate level $\a$, called active threshold, which is similar in spirit to the volume factors. The upside of this approach is that such a definition will be computationally accessible, as we give a precise formula for $\a$. A  downside is that the active concentration set $A$ will not have the exact same measure as $V_{q,p}$, but rather its constant multiple. 

So, let us define an \emph{$(q,p)$-active threshold} by
\begin{equation}\label{}
s_{q,p} = \frac{ \lan |f|^q \ran^{\frac{1}{q-p}}}{ \lan |f|^p \ran^{\frac{1}{q-p}}}.
\end{equation}
We recall that in the context of turbulence, $f = \d_\ell \bu$, and therefore $s_{q,p}$ depend on the scale. In \cite{CS2014} these appeared in the special case of $p=2$, $q = 3$, under the term of active speeds which refers to their physical unit. 

By analogy with the previous section let us list some of the fundamental properties of active thresholds:
\begin{itemize}
	\item[(s1)] unit of $s_{q,p}$ is the same as the unit of  $f$;
	\item[(s2)] symmetric:  $s_{q,p} = s_{p,q}$;
	\item[(s3)] $1$-homogeneous: $s_{q,p}(\l f) = \l s_{q,p}(f)$;
	\item[(s4)] bound from above: $s_{q,p} \leq \|f\|_\infty$;
	\item[(s5)] log-convexity:  for any triple $p_1<p_2<p_3$, and any $q$, 
	\begin{equation}\label{e:sconvexity}
	\begin{split}
	s_{q,p_2} &\leq (s_{q,p_1})^{\frac{q-p_1}{q-p_2} \frac{p_3-p_2}{p_3-p_1}}(s_{q,p_3})^{\frac{q-p_3}{q-p_2} \frac{p_2-p_1}{p_3-p_1}}, \quad q<p_2 \\
	s_{q,p_2} &\geq (s_{q,p_1})^{\frac{q-p_1}{q-p_2} \frac{p_3-p_2}{p_3-p_1}}(s_{q,p_3})^{\frac{q-p_3}{q-p_2} \frac{p_2-p_1}{p_3-p_1}}, \quad q>p_2.
	\end{split}        
	\end{equation}
	\item[(s6)] monotonicity:  $\p_q s_{q,p}\geq 0$ for any $p$.
	\item[(s7)] volume-threshold-structure function relation: 
	\[
	s_{q,p}^p V_{q,p} = \lan |f|^p \ran.
	\]
\end{itemize}
Note that (s6) is a consequence of (s7) and (V6). 

Now to show relevance of the newly introduced thesholds to the concentration phenomena let us fix a family of  adimensional constants $0<\s_{q,p}<1$, and define the  \emph{$(q,p)$-active region} by
\begin{equation}\label{}
A_{q,p} = \{ |f| \geq \s_{q,p} s_{q,p} \}.
\end{equation}
By Chebyshev's inequality, for $p>0$, we readily obtain the bound
\[
|A_{q,p}| \leq \frac{1}{\s_{q,p}^p s^p_{q,p}} \lan |f|^p \ran = \frac{1}{\s_{q,p}^p} V_{q,p},
\]
and in the case $p<q$,
\[
\lan |f|^q \chi_{\O \bs A_{q,p} } \ran \leq \s_{q,p}^{q-p} \frac{ \lan |f|^q \ran}{ \lan |f|^p \ran} \lan |f|^p \chi_{\O \bs A_{q,p} } \ran \leq  \s_{q,p}^{q-p} \lan |f|^q \ran,
\]
which implies the expected concentration property:
\begin{equation}\label{}
(1-\s_{q,p}^{q-p}) \lan |f|^q \ran \leq   \lan |f|^q \chi_{A_{q,p}} \ran .
\end{equation}
Due to this inequality it is natural to set $\s_{q,p} = c_{q,p}^{\frac{1}{q-p}}$, which corresponds to the same level of concentration as the superlevel sets $\{|f|\geq \a\}$ in \lem{l:AV}. Although such a choice of parameters may not be well-justified physically, it is certainly resonates mathematically with the prior result.

We now investigate the relevance of these active regions to the formalism of H\"older  sets $A_h$ we discussed in \eqref{e:Ah}. To this end, we let  $q\ra p$. Once again, we observe the same phenomenon -- the information about concentration of $f$ gets lost since $\s_{q,p}^{q-p} \to 1$. However, the active thresholds converge to something non-trivial, namely
\begin{equation}\label{e:spdef}
\lim_{q \ra p} s_{q,p} = s_p=  \exp\left\{  \frac{\lan |f|^p \ln |f| \ran}{\lan |f|^p \ran}\right\}.
\end{equation}

A remarkable property of this 1-paramter family is that it stores information about the entire original 2-parameter family via the following restoration formula \begin{equation}\label{e:means}
s_{q,p} = \exp\left\{ \frac{1}{q-p} \int_p^q \ln s_{r}\, \dr \right\}.
\end{equation}
Concerning other properties the reader can readily check that  $s_p$'s inherit the same (s1)--(s4), (s6), while (s7) translates into 
\begin{equation}\label{e:sSV}
s_{p}^p V_{p} = \lan |f|^p \ran.
\end{equation}

Let us now go back to the physical interpretation of $s_p$'s. First, let us observe that for $f = \d_\ell \bu$, $s_p = \ell^h$ are exact same values that appeared in  \eqref{e:spfirst} in our initial heuristic argument, which define the active regions $A_h$ of H\"older regularity. There $h$ is related to $p$ via the usual transformation $h = \zeta'_p$. So,  if we define $A_p$  in the new terms parameterized by $p$
\begin{equation}\label{ }
A_p = \{ c s_p \leq |\d_\ell \bu| \leq C s_p\},
\end{equation}
then as before by Chebyshev's inequality and \eqref{e:sSV},
\[
\mu(A_p) \lesssim \frac{\lan |\d_\ell \bu|^p \ran}{s_p^p} = V_p = \ell^{3- D_p}.
\]
This provides the direct analogue of \eqref{e:Ahdh} in terms of the volumetric quantities.

As in the last section there remains one last unsettled issue: since in the limit as $q \to p$ the concentration information of the sets $A_{q,p}$ deteriorates, we still would like to understand what kind of concentration do the limiting thresholds $s_p$ capture? As previously we will find that it is responsible for concentration of entropic densities.

To that end, we extract the physical unit of $s_p$, equal that of $f$, upfront. Namely, if the physical unit of $f$ is $U_0$ we rewrite the formula for $s_p$ as follows:
\begin{equation}
s_p= U_0 \exp\left\{  \frac{\lan |f|^p \ln \frac{|f|}{U_0} \ran}{\lan |f|^p \ran}\right\}.
\end{equation}
Note that this is the exact same formula as appeared the original definition \eqref{e:spdef}, and it holds for any $U_0$. We can view the ratio 
\[
\frac{s_p}{U_0} = \exp\left\{  \frac{\lan |f|^p \ln \frac{|f|}{U_0} \ran}{\lan |f|^p \ran}\right\}.
\]
as an adimensional threshold value for an active velocity. 

\begin{lemma}\label{} Let  
\begin{equation}
A = \{ |f| \geq  U_0^{1-c} s_p^{c} \},
\end{equation}
where $0<c<1$. The function $f$ concentrates on $A$ in the following sense
\begin{equation}\label{e:pcons}
(1-c) \left\langle  |f|^p  \ln_+ \frac{|f|}{U_0} \right\rangle  \leq \left\langle  |f|^p  \ln_+ \frac{|f|}{U_0} \, \chi_{A} \right\rangle .
\end{equation}  
\end{lemma}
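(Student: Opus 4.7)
The plan is to split $\O = A \cup (\O \setminus A)$ and bound the integrand $|f|^p \ln_+(|f|/U_0)$ pointwise on $\O \setminus A$ by a constant multiple of $|f|^p$, then invoke the defining identity for $s_p$ to convert that bound into a multiple of the full integral on the right-hand side of \eqref{e:pcons}. This follows the same Chebyshev-style template used in the proofs of \lem{l:AV} and \lem{l:weakconc}.

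First I would reduce to the case $s_p > U_0$. If instead $s_p \leq U_0$, then since $0<c<1$ the threshold satisfies
\[
U_0^{1-c} s_p^c = U_0 \left(s_p/U_0\right)^c \leq U_0,
\]
so on $\O \setminus A$ one has $|f| < U_0$, hence $\ln_+(|f|/U_0) \equiv 0$ there. The integral over $\O \setminus A$ vanishes and \eqref{e:pcons} is trivial. Assume now $s_p > U_0$, so $\ln(s_p/U_0) > 0$. On $\O \setminus A$, $|f| < U_0^{1-c} s_p^c$ gives $\ln(|f|/U_0) < c \ln(s_p/U_0)$. Splitting into the subcases $|f| < U_0$ (where $\ln_+(|f|/U_0) = 0$) and $|f| \geq U_0$ (where $\ln_+(|f|/U_0) = \ln(|f|/U_0)$), both subcases yield the uniform pointwise bound
\[
\ln_+(|f|/U_0) \leq c \ln(s_p/U_0) \quad \text{on } \O \setminus A.
\]

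Multiplying by $|f|^p$ and integrating against $\dmu$, I obtain
\[
\lan |f|^p \ln_+(|f|/U_0) \chi_{\O \setminus A}\ran \leq c \ln(s_p/U_0) \lan |f|^p \ran.
\]
Now the definition \eqref{e:spdef} rewritten for $s_p/U_0$ gives $\ln(s_p/U_0)\lan |f|^p\ran = \lan |f|^p \ln(|f|/U_0) \ran$, and since $\ln \leq \ln_+$ this is bounded by $\lan |f|^p \ln_+(|f|/U_0) \ran$. Chaining the inequalities yields $\lan |f|^p \ln_+(|f|/U_0) \chi_{\O \setminus A}\ran \leq c \lan |f|^p \ln_+(|f|/U_0)\ran$, and subtracting from the full integral gives \eqref{e:pcons}.

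The argument is essentially mechanical once the correct pointwise bound on $\O \setminus A$ is identified. The only real subtlety is the $\ln$ versus $\ln_+$ bookkeeping: $s_p$ is defined by $\ln$, but the conclusion is stated with $\ln_+$. This is precisely the reason for the reduction to the case $s_p > U_0$ (so that both sides of the pointwise bound are non-negative) and for the final use of $\ln \leq \ln_+$. I do not anticipate any analytical obstacle beyond this case-splitting.
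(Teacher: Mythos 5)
Your proof is correct and follows essentially the same route as the paper's: a pointwise bound for $\ln_+(|f|/U_0)$ on $\O\setminus A$ by $c\,\ln(s_p/U_0)$, integration against $|f|^p$, and the identity $\ln(s_p/U_0)\lan |f|^p\ran = \lan |f|^p \ln(|f|/U_0)\ran \leq \lan |f|^p \ln_+(|f|/U_0)\ran$. The only difference is cosmetic: the paper compresses your explicit reduction to the case $s_p>U_0$ into the positive-part notation $\lan\cdot\ran_+$, whereas you spell out the case analysis, which if anything makes the $\ln$ versus $\ln_+$ bookkeeping clearer.
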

Here we denote
\[
\ln_+ = \begin{cases} \ln,& \text{ if } \ln>0 \\ 0,&  \text{ if } \ln=0 \end{cases}
\]
\begin{proof}
To see \eqref{e:pcons} note that on $A$ we have 
\[
\frac{|f|}{U_0} \geq \left( \frac{s_p}{U_0} \right)^{c}.
\]
Hence, on the complement we have
\[
\left\langle  |f|^p  \ln_+ \frac{|f|}{U_0} \, \chi_{\O_T \backslash A} \right\rangle \leq c \left\langle  |f|^p  \frac{ \lan |f|^p \ln \frac{|f|}{U_0} \ran_+  }{ \lan |f|^p \ran} \right\rangle =  c \left\langle  |f|^p \ln \frac{|f|}{U_0} \right\rangle_+ \leq c \left\langle  |f|^p \ln_+ \frac{|f|}{U_0} \right\rangle.
\]
This implies \eqref{e:pcons}. 
\end{proof}

\section{Energy spectrum} The purpose of this section is to provide a rigorous link between scaling properties of the second order structure function and the energy spectrum. To make it rigorous we define the energy spectrum  classically as the Fourier transform of the properly defined correlation function.

So, let $\bu$ be a velocity field. We define a 3D correlation function as follows
\begin{equation}
\G(\by) = \frac12 \int_{O(3)}  \lan \bu(\cdot + A \by) \cdot \bu(\cdot) \ran  \dnu(A),
\end{equation}
where $O(3)$ is the orthogonal group on $\R^3$, and $\dnu$ is the normalized Haar measure on it. 
Clearly, $\G(0) = \cE = \frac12 \langle |\bu|^2 \rangle$ is the twice total energy.  We will work under assumption that the correlation function decays sufficiently fast at infinity. It suffices to have 
\begin{equation}\label{e:Gdecay}
    |\n^k \G(\by)| \lesssim \frac{1}{1+|\by|^{k-1}}, \quad k \leq 5.
\end{equation}
We define the 3D  energy spectrum by
\begin{equation}\label{e:E}
E_{3D}(\bkap) = \int_{\R^3} e^{-i \bkap \cdot \by} \G(\by) \dby.
\end{equation}
Using Parseval's identity we obtain
\[
E_{3D}(\bkap) = \frac12 \frac{1}{(2\pi)^3} \int_{\R^3} e^{-i \bkap \cdot \by}\int_{O(3)} |\hat{\bu}(\bxi)|^2 e^{i A \by \cdot \bxi} \dbxi \dnu(A) \dby.
\]
Moving the $\dby$-integral inside yields a Dirac at $A^{-1} \bxi = \bkap$. Hence,
\[
E_{3D}(\bkap) = \frac12 \int_{O(3)} |\hat{\bu}(A\bkap)|^2 \dnu(A).
\]
It is a classical result that the push-forward of the Haar measure $\dnu$ through the map $T_\bkap:A \to A\bkap$ is the normalized surface measure on the sphere $\k \S^2$, $\k = |\bkap|$.  We thus obtain
\[
E_{3D}(\bkap) = \frac{1}{8 \pi \k^2} \int_{\k \S^2}  |\hat{\bu}(\bxi)|^2 \dbxi.
\]
This leads to the classical energy spectrum definition
\[
E(\k) = 4\pi \k^2 E_{3D}(\bkap) = \frac12 \int_{\k \S^2}  |\hat{\bu}(\bxi)|^2 \dbxi.
\]
Thus, the total energy is recovered from the spectrum by
\[
\int_0^\infty E(\k) \dk = \cE.
\]
We now relate $E(\k)$ and the second order structure function.  Note that
\[
\G(\by) = \cE + \frac12 \int_{O(3)}  \lan (\bu(\cdot + A \by) - \bu(\cdot)) \cdot \bu(\cdot) \ran  \dnu(A).
\]
Shifting inside the average by $A\by$ and reversing the sign of $\by$ we obtain
\[
\G(\by) = \cE - \frac14 \int_{O(3)}  \lan |\bu(\cdot + A \by) - \bu(\cdot)|^2 \ran  \dnu(A).
\]
The obtained average is the second order structure function 
\[
S_2(\ell) =  \frac14 \int_{O(3)}  \lan |\bu(\cdot + A \by) - \bu(\cdot)|^2 \ran  \dnu(A), \quad \ell = |\by|.
\]
One can now relate scaling laws of $S_2(\ell)$ to those of the energy spectrum.

\begin{proposition}\label{p:spec}
	Suppose $S_2(\ell) = c \ell^\a$ for all $\ell < \ell_0$ and some $0<\a<2$.  Then 
    \begin{equation}
E(\k) = \frac{c(\k)}{\k^{1+\a}}, \quad \text{ where } c(\k) \to c_0 >0 \text{ as } \k \to \infty.
    \end{equation}
Conversely, if  $E(\k) = c \k^{-1-\a}$ for all $\k \geq \k_0$, then 
\begin{equation}
S_2(\ell) = c(\ell) \ell^\a, \quad \text{ where } c(\ell) \to c_0 >0 \text{ as } \ell \to 0.
\end{equation}
\end{proposition}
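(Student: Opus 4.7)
The radial symmetry of $\G$ reduces the three-dimensional Fourier transform defining $E_{3D}$ to a one-dimensional sine transform via $\int_{\R^3}e^{-i\bkap\cdot \by} F(|\by|)\dby = \frac{4\pi}{\k}\int_0^\infty F(r)\,r\sin(\k r)\dr$. Combined with $\G(\by)=\cE - S_2(|\by|)$ and $E(\k) = 4\pi \k^2 E_{3D}(\bkap)$, this gives
\[
E(\k) \;=\; 16\pi^2 \k \int_0^\infty \bigl(\cE - S_2(r)\bigr)\, r\sin(\k r)\dr,
\]
and, by Fourier inversion, the matching expression for $\cE - S_2(\ell)$ as a $\sin(\k\ell)/\k$ transform of $E(\k)$. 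Both directions of the proposition are thereby reduced to one-variable sine-transform asymptotics driven by a power-law singularity of $S_2$ at the origin.

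For the forward direction, I would fix a smooth cutoff $\chi$ with $\chi\equiv 1$ on $[0,\ell_0/2]$ and $\supp\chi\ss[0,\ell_0]$ and decompose
\[
\cE - S_2(r) \;=\; -c\, r^\a\chi(r) + R(r),
\]
so that $R\equiv \cE$ on $[0,\ell_0/2]$, $R$ coincides with $\G$ for $r\ge\ell_0$, and $R$ is $C^5$ with integrable decay by \eqref{e:Gdecay}. The singular piece drives the leading asymptotic through the three-dimensional Riesz identity
$\widehat{|\by|^\a}(\bkap) = K_\a\,|\bkap|^{-3-\a}$ (as tempered distributions, with $K_\a<0$ for $0<\a<2$, computable from the standard $\Gamma$-function constants). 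Truncation by $\chi$ only contributes a smoothing correction $\widehat{|\by|^\a(1-\chi(|\by|))}$, which I would bound by $O(|\bkap|^{-N})$ for every $N$ by iterating the identity $\widehat{f}(\bkap) = |\bkap|^{-2N}\widehat{(-\D)^N f}(\bkap)$ and noting that $(-\D)^N\bigl(|\by|^\a(1-\chi(|\by|))\bigr)$ is integrable for $N$ large (its leading behavior is $|\by|^{\a-2N}$ at infinity and it is smooth since $1-\chi$ vanishes near the origin). Iterated integration by parts using the $C^5$ regularity of $R$ yields $\hat R(\bkap) = o(|\bkap|^{-(3+\a)})$. Substituting into $E(\k) = 4\pi\k^2\hat\G(\bkap)$ produces $E(\k) = c_0\k^{-(1+\a)} + o(\k^{-(1+\a)})$ with $c_0 = -4\pi c K_\a >0$.

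The converse runs along a symmetric template: one splits $E(\k) = c\,\k^{-1-\a}(1-\psi(\k)) + E_0(\k)$ with $\psi\in C_c^\infty([0,\k_0])$, so that $E_0$ is compactly supported and its contribution to $\cE - S_2(\ell)$ is a real-analytic function of $\ell$ by Paley--Wiener; the additive constant $\cE$ is absorbed by the value at $\ell=0$ using $S_2(0)=0$, and the first Taylor correction is $O(\ell^2) = o(\ell^\a)$ since $\a<2$. The high-frequency piece integrated against $\sin(\k\ell)/\k$ then yields $c_0'\,\ell^\a$ by the inverse Riesz identity. The main technical obstacle throughout is giving rigorous meaning to the oscillatory integrals $\int_0^\infty r^{1+\a}\sin(\k r)\dr$ and $\int_0^\infty \k^{-2-\a}\sin(\k\ell)\dk$, which fail absolute convergence when $\a\in(1,2)$ (respectively $\a\in(0,1)$); these must be interpreted through the Riesz distributional framework, or equivalently handled by repeated integration by parts inside the cutoff before sending it to infinity. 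The restriction $0<\a<2$ is precisely what keeps both the Riesz constant $K_\a$ and the analogous inverse constant finite and nonzero.
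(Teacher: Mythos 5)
Your proposal is correct, and it reaches the same decomposition skeleton as the paper (isolate the power-law singularity of $S_2$ at the origin with a smooth cutoff, show the tail of $\G$ and the truncation errors decay faster, read off the leading term from the singular piece), but the engine driving the leading asymptotics is genuinely different. The paper stays entirely in the one-dimensional sine-transform representation $E(\k)=c\int_0^\infty \k\ell\sin(\k\ell)\G(\ell)\,\mathrm{d}\ell$, integrates by parts twice on $\ell^{1+\a}\chi(\ell)$, and then treats the cases $0<\a<1$, $\a=1$, $1<\a<2$ separately, identifying the limit constant as the convergent oscillatory integral $c_0=\int_0^\infty \sin(x)\,x^{-1+\a}\,\mathrm{d}x$ (up to the explicit prefactors). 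You instead invoke the three-dimensional distributional Riesz identity $\widehat{|\by|^{\a}}=K_\a|\bkap|^{-3-\a}$ and dispose of the truncation $|\by|^\a(1-\chi)$ via $\widehat{f}=|\bkap|^{-2N}\widehat{(-\D)^Nf}$. This buys uniformity in $\a\in(0,2)$ (no case split, since $\G(-\a/2)$ has no pole there) and a closed-form constant $K_\a<0$, at the price of working with tempered distributions and checking that $K_\a$ is nonzero on the whole range; the paper's route is more elementary and self-contained but requires the threefold case analysis. Two small points to tighten: for the smooth remainder $R$, the hypothesis \eqref{e:Gdecay} gives $|\n^5 R|\lesssim (1+|\by|^4)^{-1}\in L^1(\R^3)$, so five directional integrations by parts give $\hat R(\bkap)=O(|\bkap|^{-5})=o(|\bkap|^{-3-\a})$ as you claim, but you should integrate by parts with directional derivatives rather than powers of the Laplacian to actually exploit the odd order of regularity; and in the converse, the statement that the compactly supported piece $E_0$ contributes only $O(\ell^2)$ after subtracting its value at $\ell=0$ uses that the resulting analytic function is radial, hence even in $\ell$ with no linear term --- worth saying explicitly since $\a$ may be less than $1$. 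Your converse via Paley--Wiener is in fact more detailed than the paper's, which dismisses that direction with ``follows similarly.''
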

We note that the result actually holds for all $\a>0$ with respectively stronger assumptions on the decay of the correlation function. We restrict ourselves to $\a<2$ since in the extreme intermittency event we only have $\a = \frac53$.  For the Kolmogorov regime we have $S_2(\ell) = (\e \ell)^{2/3}$, hence $E(\k) \sim \k^{-5/3}$.
\begin{proof}
  Using that $\Gamma$ is a radial function, one can compute the Fourier integral of $\Gamma$ as a function of scalar $\ell$ as follows
 \[
 E(\k) =  c \int_0^\infty \k \ell \sin(\k \ell) \Gamma(\ell) \dell.
 \]
Let us assume w.l.o.g. that $\ell_0 = 1$. Let  $0\leq \chi \leq 1$ be a  smooth cutoff  function with $\supp \chi \ss \{0<\ell<1\}$ and $\chi(\ell)=1$ for $\ell \leq \frac12$. We can write
\[
E(\k) = c \int_0^\infty \k \ell \sin(\k \ell) \Gamma(\ell) \chi(\ell)\dell + R(\k),
\]
where 
\[
R(\k) = c \int_0^\infty \k \ell \sin(\k \ell) \Gamma(\ell) [1-\chi(\ell)]\dell .
\]
Using that $\frac{1}{\k^{4}} \frac{d^{4}}{d\ell^{4}} \sin(k\ell) =  \sin(k\ell)$, and integrating by parts we obtain
\[
R(\k) = \frac{c}{\k^{3}} \int_0^\infty \sin(\k \ell) [\ell \Gamma(\ell) (1-\chi(\ell)) ]^{(4)}\dell .
\]
Note that the under our assumption \eqref{e:Gdecay}, the integral converges absolutely, and thus $|R(\k)| \lesssim \frac{c}{\k^{3}}$, which is of smaller order.   Continuing with the main term we first split
\[
\int_0^1 \k \ell \sin(\k \ell) \Gamma(\ell) \chi(\ell)\dell  = \cE \int_0^1 \k \ell \sin(\k \ell) \chi(\ell)\dell  -  \k  \int_0^1 \sin(\k \ell) \ell^{1+\a} \chi(\ell)\dell .
\]
The first integral decays as an arbitrary power of $\k$ which can be seen by performing similar computation as above.  For the second we integrate by parts twice:
\begin{equation}\label{e:a01}
-  \k  \int_0^1 \sin(\k \ell) \ell^{1+\a} \chi(\ell)\dell  = - \int_0^1 \cos(\k \ell) ( \ell^{1+\a} \chi(\ell)  )' \dell   = \frac{1}{\k}  \int_0^1 \sin(\k \ell) ( \ell^{1+\a} \chi(\ell)  )'' \dell .
\end{equation}
Let us consider the case $0<\a<1$ at this point.  Expanding 
\[
( \ell^{1+\a} \chi(\ell)  )''  = (1+\a) \a \ell^{-1+\a} \chi(\ell)+ 2 (1+\a) \ell^{\a} \chi'(\ell) + \ell^{1+\a} \chi''(\ell)
\]
we can see that the latter two terms are supported away from the origin.  We can apply the same integration by parts as before to show that those two terms decay as any power of $\k$.  Finally, 
\[
\frac{1}{\k}  \int_0^1 \sin(\k \ell)  \ell^{-1+\a} \chi(\ell)  \dell = \frac{1}{\k^{1+\a}}  \int_0^\k \sin(x)  x^{-1+\a} \chi(\ell/\k)  \dell.
\]
Denoting $c(\k) =  \int_0^\k \sin(x)  x^{-1+\a} \chi(\ell/\k)  \dx$, due to condition $0<\a<1$ we can see that the integral results in a convergent alternating series with vanishing error term. Moreover,  $\lim_{\k \to \infty} c(\k) =  \int_0^\infty \sin(x)  x^{-1+\a}  \dx > 0$. This finishes the case $0<\a<1$.

In the case $\a =1$, we integrate by parts again in \eqref{e:a01}:
\[
\frac{1}{\k}  \int_0^1 \sin(\k \ell) ( \ell^{2} \chi(\ell)  )'' \dell =  \frac{1}{k^2} + \frac{1}{k^2} \int_0^1 \cos(\k \ell) ( \ell^{2} \chi(\ell)  )''' \dell .
\]
Now the integrand is supported away from the origin and hence decays at least as $1/k^3$. 

In the case $1<\a<2$ we obtain
\[
\frac{1}{\k}  \int_0^1 \sin(\k \ell) ( \ell^{1+\a} \chi(\ell)  )'' \dell =  \frac{1}{k^2} \int_0^1 \cos(\k \ell) ( \ell^{1+\a} \chi(\ell)  )''' \dell.
\]
Here the main term is 
\[
 \int_0^1 \cos(\k \ell) \ell^{-2+\a} \chi(\ell)  \dell \sim \frac{1}{\k^{1+\a}}.
 \]
  This proves the first part. 

The converse statement follows similarly by taking inverse Fourier transform of \eqref{e:E}.

\end{proof}

\newcommand{\etalchar}[1]{$^{#1}$}


\end{document}